\newcommand\set[2]{ \left\{ {#1} \; \colon \; {#2} \right\} }
\newcommand\setnd[1]{ \left\{ #1 \right\} }
\newcommand\floor[1]{\left\lfloor{#1} \right\rfloor}
\newcommand\card[1]{\left|{#1}\right|} 
\newcommand{\inp}[2]{\left\langle #1, #2 \right\rangle}
\newcommand\ZZ{\mathbb{Z}} 
\newcommand\CC{\mathbb{C}} 
\newcommand{\F}{\mathbb{F}}
\newcommand{\defeq}{\triangleq}
\newcommand{\dual}[1]{#1^\bot}
\newcommand{\binomial}[2]{{#1 \choose #2}}
\DeclareMathOperator{\supp}{\mathrm{supp}}
\DeclareMathOperator{\wt}{\mathrm{wt}}
\DeclareMathOperator{\cay}{\mathrm{Cay}}
\crefname{theorem}{Theorem}{Theorems}
\crefname{lemma}{Lemma}{Lemmas}
\crefname{corollary}{Corollary}{Corollaries}
\renewcommand{\autoref}[1]{\cref{#1}}
\title{Quantum Chromatic Number of  \\
Subgraphs of Orthogonality Graphs and \\ the Distance-$2$ Hamming Graph}
\author{Tao Luo\authornote{1,2,4}
\and
Yu Ning\authornote{3,4}
\and
Xiande Zhang\authornote{1,2,4,5}
}
\email{luotao01@mail.ustc.edu.cn}, \email{drzhangx@ustc.edu.cn}).}
\begin{document}
\maketitle


\begin{abstract}
The determination of the quantum chromatic number of graphs has attracted
considerable attention recently. However, there are few families of graphs whose
quantum chromatic numbers are determined. A notable exception is the family of
orthogonality graphs, whose quantum chromatic numbers are fully determined. In
this paper, we extend these results by determining the exact quantum chromatic
number of several subgraphs of the orthogonality graphs. Using the technique of
combinatorial designs, we also determine the quantum chromatic number of the
distance-$2$ Hamming graph, whose edges consist of binary vectors of Hamming
distance $2$, for infinitely many length.
\end{abstract}

\section{Introduction}
\label{sec:introduction}
A $c$-\emph{coloring} of a graph $G$ with vertex set $V$, is a map $V \to [c]
\defeq \setnd{1,2,\dots,c}$ such that adjacent vertices receive distinct colors
in $[c]$. The \emph{chromatic number} $\chi(G)$ of $G$ is the smallest $c$ such
that $G$ admits a $c$-coloring, which has been thoroughly studied in the
literature \cite[Chapter 5]{GraphTheory}. In the past decades, motivated by the
study in quantum information theory \cite{QuantumVSClassical,CostSimulating},
the $c$-coloring game of graphs was proposed, which can be though of as a
quantum generalization of the classical graph coloring if the players are
accessible to quantum resources
\cite{OnTheQuantumChromaticNumber,QuantumProtocol}.

In the $c$-coloring game of the graph $G$, two players, Alice and Bob, must
convince a referee that they possess a $c$-coloring of $G$. Both of the players
receive a vertex $v,w \in V$ chosen randomly by the referee, respectively, and
must then respond with a color $c_v,c_w \in [c]$ to the referee. During the
game, Alice and Bob are not allowed to communicate with each other, but they are
allowed to share classical or quantum resources and agree on some protocol a
priori, which may depend on $G$. The referee is convinced that Alice and Bob
possess a $c$-coloring of $G$, if $c_v = c_w$ when $v = w$ and $c_v \neq c_w$
when $v$ and $w$ are adjacent. While classical players can win the $c$-coloring
game of $G$ with probability $1$ if and only if a classical $c$-coloring of $G$
exists, for particular graphs, quantum players, who have access to a shared
entangled quantum state and may perform local measurements on the shared
entangled state, may still win with probability $1$ even if classical
$c$-coloring of $G$ does not exist. The smallest number $c$ for which a quantum winning
protocol exists is called the \emph{quantum chromatic number} of $G$, denoted as
$\chi_q(G)$ \cite{OnTheQuantumChromaticNumber,QuantumHomomorphisms}. Notably,
there exist families of graphs for which $\chi_q(G)$ is strictly smaller than
$\chi(G)$, demonstrating a quantum advantage
\cite{ExactHadamardGraphs,Feng,SmallGraphs}. The quantum chromatic number is
also related to other important parameters of graphs \cite{QuantumHomomorphisms}
or objects in quantum information theory \cite{KSSet}.

In general, it is NP hard to determine the quantum chromatic number of a graph
\cite{JiZhengfeng}. To our knowledge, there are only few classes of graphs whose
quantum chromatic number was determined. For example, the complete graph on $n$
vertices has quantum chromatic number $n$ \cite{OnTheQuantumChromaticNumber};
bipartite graphs have quantum chromatic number $2$
\cite{OnTheQuantumChromaticNumber}; $\chi(G) = 3$ if and only if $\chi_q(G) = 3$
\cite{OnTheQuantumChromaticNumber}, in particular, odd cycles have quantum
chromatic number $3$. Beyond these special graphs, the \emph{orthogonality
graph} $O_{n,p}$ is an important class of graphs studied in this topic, whose
vertices are vectors of length $n$ with entries the $p$-th roots of unity, and
two vectors are adjacent if and only if they are orthogonal in $\CC^{n}$. When
$p$ is a prime and $p \mid n$, $O_{n,p}$ is isomorphic to the Cayley graph over
$\ZZ_p^n$ with generating set consisting of all balanced vectors in $\ZZ_p^n$.
Especially when $p=2$, the graph $O_{4t,2}$  (also known as the Hadamard graph)
and one of its subgraph both have quantum chromatic number $4t$
\cite{ExactHadamardGraphs}.
For general $p$, Cao \emph{et, al.} \cite{SDU} recently
determined the quantum chromatic number of $O_{lp,p}$ when $l$  is
large enough with respect to $p$ and $l(p-1)$ is even.

Motivated by the above results, we study the quantum chromatic number of
subgraphs of  orthogonality graphs. In particular, we  determine the quantum
chromatic numbers of five classes of subgraphs of the orthogonality graphs
$O_{4t,2}$ and $O_{3l,3}$. Further, using the technique of combinatorial
designs, we also determine the quantum chromatic number of the distance-$2$
Hamming graph, whose edges correspond to pairs of binary vectors of Hamming
distance $2$, that is, the Cayley graph over $\ZZ_2^n$ generated by all vectors
of Hamming weight $2$ for infinitely many $n$. The results mentioned above in
the literature and in this paper are summarized in \autoref{tab:results} for
readers' convenience.

\begin{table}[t]
\centering
\begin{threeparttable}
\caption{Known Quantum Chromatic Numbers of Graphs}
\label{tab:results}
\begin{tabular}{l|c|c}
\toprule
Graphs $G$ & $\chi_q(G)$ & References \\
\midrule
$K_n$ (the complete graph on $n$ vertices)  & $n$ & \cite{OnTheQuantumChromaticNumber} \\
bipartite graphs & 2 & \cite{OnTheQuantumChromaticNumber} \\
graphs with $\chi(G) = 3$ & $3$ & \cite{OnTheQuantumChromaticNumber} \\
$O_{4,4}$  & $4$ & \cite{OnTheQuantumChromaticNumber} \\
$O_{4t,2}\cong \cay(\ZZ_2^{4t},(2t,2t))$ & $4t$ & \cite{ExactHadamardGraphs,Feng} \\
$\cay(\ZZ_2^{4t-1},(2t-1,2t))$ & $4t$ & \cite{Feng} \\
\hline
\makecell[l]{$\cay(\ZZ_p^{lp},(l,l,\dots,l))$, $p$ is an integer, \\ $l$ is
 large enough and $l(p-1)$ is even} & $lp$ 
 & \cite{SDU} \\
 \hline
$\cay(\F_q^{q^l},(q^{l-1},q^{l-1},\dots,q^{l-1}))$, $q$ is a prime power& $q^l$ & \cite{SDU} \\
\midrule
$O_{3l,3} \cong \cay(\ZZ_3^{3l},(l,l,l))$ & $3l$ & \autoref{cor:3l} \\
$\cay(\ZZ_2^{4t-1},(2t,2t-1) \cup (2t-1,2t))$ & $4t$ &
\autoref{corollary:easy} \\
$\cay(\ZZ_2^{4t-2},(2t-1,2t-1) \cup (2t-2,2t))$ & $4t$ &
\autoref{corollary:easy} \\
$\cay(\ZZ_3^{3l-1},(l-1,l,l)\cup (l,l-1,l) \cup (l,l,l-1))$  & $3l$ &
\autoref{corollary:easy}  \\
$\cay(\ZZ_p^{lp-1},(l-1,\dots,l) \cup \dots \cup (l,\dots,l-1))$ & $lp$ & 
\autoref{corollary:easy}\\
$\cay(\ZZ_3^{3l-1},(l-1,l,l))$ & $3l$ & \autoref{theorem:subgraph_eigenvalue_3} \\
$\cay(\ZZ_3^{3l-2},(l-2,l,l)\cup (l-1,l-1,l) \cup (l-1,l,l-1))$& $3l$ &
\autoref{theorem:subgraph_3_III} \\
\midrule
$\cay(\ZZ_2^{q},(q-2,2))$, $q \equiv 3 \pmod{4}$ is a prime power & $q+1$ &
\autoref{tab:BIBD} \\
\hline
$\cay(\ZZ_2^{2^{t+2}-1},\qty(2^{t+2}-3,2))$ & $2^{t+2}$ & \autoref{tab:BIBD} \\
\hline
\makecell[l]{$\cay(\ZZ_2^{q^2+2q},(q^2+2q-2,2))$, \\ $q$ and $q+2$ are both odd prime powers}
& $q^2+2q+1$ & \autoref{tab:BIBD} \\
\bottomrule
\end{tabular}
\begin{tablenotes}
\item[1] In the table, $\cay(\ZZ_2^{4t},(2t,2t))$ is the Cayley graph over
$\ZZ_2^{4t}$ with a generating set consisting of vectors of type $(2t,2t)$ (see
\autoref{subsec:orthogonality_graphs}); Similar for other Cayley graphs.
\end{tablenotes}
\end{threeparttable}
\end{table}

The paper is organized as follows. \autoref{sec:preliminary} introduces
necessary preliminaries on quantum chromatic numbers and orthogonality graphs.
In \autoref{sec:3}, we study the spectrum of the orthogonality graph $O_{3l,3}$,
which will be used to determine the quantum chromatic number of its subgraphs.
In \autoref{sec:subgraphs}, we determine the quantum chromatic number of five
classes of subgraphs of $O_{4t,2}$ and $O_{3l,3}$. In \autoref{sec:Hn2}, with
the technique of combinatorial designs, we determine the quantum chromatic
number of the distance-$2$ Hamming graph, that is, the Cayley graph over
$\ZZ_2^n$ generated by vectors of Hamming weight $2$ for certain $n$. Finally,
we conclude this paper in \autoref{sec:conclusion} and discuss open problems.

\section{Preliminary}
\label{sec:preliminary}
In this section, we introduce the notions and related results about the
quantum chromatic number of orthogonality
graphs.

Let $G = (V,E)$ be a simple graph. 
The quantum chromatic number of $G$ was motived by
quantum information theory and was originally formulated in the framework of
non-local games as described in Introduction (also see
\cite{OnTheQuantumChromaticNumber} and the references therein). We adopt
the mathematical definition of quantum chromatic numbers as in
\cite{KSSet,SpectralLB,QuantumHomomorphisms}. A \emph{quantum $c$-coloring} of
$G$ is a collection of orthogonal projections $P_{v,i} : \CC^d \to \CC^d$ for $v
\in V$ and $i \in [c]$ satisfying the following two conditions:
\begin{itemize}
\item[(i)] for all $v \in V$, $\sum_{i \in [c]} P_{v,i} = I_d,$ and
\item[(ii)] for all adjacent vertices $v \sim w$ and all $i \in [c]$, $P_{v,i}P_{w,i} = 0.$
\end{itemize}
The \emph{quantum chromatic number} $\chi_q(G)$ of $G$ is the smallest $c$ such that
$G$ admits a quantum $c$-coloring for some dimension $d \ge 1$.

A homomorphism from a graph $G=(V,E)$ to a graph $H=(V',E')$ is a map $f:
V\rightarrow V'$ such that if $\{u,v\}\in E$, then $\{f(u),f(v)\}\in E'$. It is
known that \cite{OnTheQuantumChromaticNumber} if there is a homomorphism of
graphs $G \to H$, then
\begin{equation}\label{eq:subg}
  \chi_q(G) \le \chi_q(H).
\end{equation}In particular, if $G$ is a subgraph of  $H$, then
$\chi_q(G) \le \chi_q(H)$.

\subsection{Orthogonality Graphs}
\label{subsec:orthogonality_graphs}
Let $p \ge 2$ be an integer, $\zeta_p \defeq e^{2 \pi \sqrt{-1}/p}$ be the
$p$-th root of unity, and $U_p \defeq \set{\zeta_p^i}{i \in \ZZ_p}$ be the group
generated by $\zeta_p$, where $\ZZ_p$ denotes the ring of integers modulo $p$.
The \emph{orthogonality graph} $O_{n,p}$ has vertex set $U_p^n$, and two
vertices $\mathbf{v},\mathbf{w} \in U_p^n$ are adjacent if and only if
$\mathbf{v}$ and $\mathbf{w}$ are orthogonal with respect to the standard
Hermitian inner product on $\CC^{n}$. 

Let $G$ be a graph with vertex set $V$. An \emph{orthogonal representation} of
$G$ is a map $\phi : V \to \CC^d$ for some $d \ge 1$, such that $\phi(v)$ and
$\phi(w)$ are orthogonal whenever $v$ and $w$ are adjacent in $G$.
The orthogonal representation $\phi$ is \emph{flat} if for all $v \in
V$, each component of $\phi(v)$ is of the same modulus. It is known that if $G$
admits a flat orthogonal representation $\phi : V \to \CC^d$, then one can
construct a quantum $d$-coloring of $G$ from this representation
\cite{Oddities}, and thus $\chi_q(G) \le d$.

The orthogonality graph $O_{n,p}$ is naturally equipped with a flat orthogonal
representation over $\CC^n$. Thus,
\begin{equation}
	\label{eqn:ub}
\chi_q(O_{n,p}) \le n.
\end{equation}	
For the lower bound of $\chi_q(O_{n,p})$, there exists a  spectral lower bound from \cite{SpectralLB},
\begin{equation}
	\label{eqn:spectral_lb}
\chi_q(G) \ge 1 - \frac{\lambda_{\max}(G)}{\lambda_{\min}(G)},
\end{equation} where
$\lambda_{\max}(G)$ and $\lambda_{\min}(G)$ are the largest and smallest
eigenvalues of  $G$, respectively.

To compute $\lambda_{\max}$ and $\lambda_{\min}$ of $O_{n,p}$,
we consider it as Cayley graphs over $\ZZ_p^n$, whose eigenvalues
can be expressed conveniently in terms of characters of abelian groups.
For $\mathbf{v} = (v_1,\dots,v_n) \in \ZZ_p^n$,
the \emph{type} of $\mathbf{v}$ is the sequence $T(\mathbf{v}) \defeq
(t_0,t_1,\dots,t_{p-1})$ given by
\begin{equation}
	\label{eqn:type}
\wt_i(\mathbf{v}) \defeq t_i \defeq \card{\set{j \in [n]}{v_j = i}}, \quad i \in \ZZ_p.
\end{equation}
It is clear that $T(\mathbf{v})$ is an ordered non-negative $p$-partition of
$n$. For any such partition $T = (t_0,\dots,t_{p-1})$ of $n$, we denote by
$\ZZ_p^n(t_0,\dots,t_{p-1})$ the set of vectors in $\ZZ_p^n$ of type $T$. When
$p\mid n$, vectors of type $(n/p,n/p,\ldots,n/p)$ are called {\it balanced}
vectors. Under these notations, when $p$ is a prime and $p\mid n$,
\begin{equation} O_{n,p} \cong \cay(\ZZ_p^n,\ZZ_p^n(n/p,n/p,\ldots,n/p)).
\end{equation}
Note that when $p$ is prime and $p \nmid n$, $O_{n,p}$ is empty.

When $p = 2$, if $n \equiv 2 \pmod{4}$, then $O_{n,2}$ is bipartite, and thus
$\chi_q(O_{n,2}) = \chi(O_{n,2}) = 2$ \cite{OnTheQuantumChromaticNumber}. If $n
\equiv 0 \pmod{4}$, then $\chi_q(O_{n,2}) = n$
\cite{OnTheQuantumChromaticNumber,Feng} while $\chi(O_{n,2})$ is exponential in
$n$, showing the advantage of quantum entanglement in the coloring game of
$O_{n,2}$.

When $p=3$, if $3 \mid n$,
the quantum chromatic number of $O_{n,3}$ was shown to be $n$ in \cite{SDU} for
large $n$. In fact, the authors of  \cite{SDU} showed
$\chi_q(\cay(\ZZ_p^n,\ZZ_p^n(n/p,\dots,n/p)))=n$ for every integer $p$ and
large $n$ satisfying $p \mid n$ and $(p-1)n/p$ even. Since
$\cay(\ZZ_p^n,\ZZ_p^n(n/p,\dots,n/p))$ is a subgraph of $O_{n,p}$ for
non-prime $p$, by \autoref{eq:subg}, we know that $\chi_q(O_{n,p}) = n$ for
large $n$ satisfying $p \mid n$ and $(p-1)n/p$ even.

In this paper, we focus on determining the quantum chromatic number of  several
classes of subgraphs of $O_{n,p}$ with $p=2,3$. The upper bound is trivially
$n$, while the main work is to give the matching spectral lower bound by
determining the smallest eigenvalue of $O_{n,p}$, which is the one with the
second largest absolute value. 

\subsection{Eigenvalues of Cayley Graphs}
\label{subsec:eigenvalue_Hamming}
We recall the eigenvalues of a Cayley graph. Suppose  
$G = \cay(A,S)$ is a Cayley graph over a finite abelian group $A$ generated by
an inversion-closed subset $S \subset A \setminus \setnd{0}$. For $a \in A$, we
denote by $\chi_a : A \to \CC^*$ the character of $A$ corresponding to $a$
\cite[Chapter 5]{FiniteFields}. Then all eigenvalues of $G = \cay(A,S)$ are
given by (see \cite[Chapter 1]{SpectraCayleyGraphs})
\begin{equation}
	\label{eqn:eigenvalue_Cayley_graph}
	\lambda(a) \defeq \sum_{s \in S} \chi_a(s), ~a\in A.
\end{equation}
For Cayley graphs over $\ZZ_p^n$, it is known that all the characters are
\cite[Chapter 5]{FiniteFields}
\begin{equation}
\label{eqn:character_p}
\chi_{\mathbf{v}} : \ZZ_p^n \to \CC^*, \quad \mathbf{x} \mapsto
\zeta_p^{\mathbf{v} \cdot \mathbf{x}},
\end{equation}
where $\mathbf{v} = (v_1,\dots,v_n), \mathbf{x} = (x_1,\dots,x_n) \in \ZZ_p^n$
and $\mathbf{v} \cdot \mathbf{x} = \sum_{i=1}^n v_ix_i$.

\subsubsection{$p=2$}\label{subsecp2} 
For simplicity, denote $L_r \defeq
\ZZ_2^n(n-r,r)$ for $r \in [0,n]$, i.e., the  set of vectors in $\ZZ_2^n$ of
Hamming weight $r$. Consider $\cay(\ZZ_2^n,L_r)$. For $\mathbf{v} \in \ZZ_2^n$
with $\wt(\mathbf{v}) = w$, by \autoref{eqn:eigenvalue_Cayley_graph}, the
eigenvalue corresponding to $\mathbf{v}$ is given by
\begin{equation}
	\label{eqn:Krawtchouk}
\lambda(\mathbf{v}) = \sum_{\mathbf{b} \in L_r}(-1)^{\mathbf{v} \cdot \mathbf{b}}
= \sum_{j}(-1)^j\binom{w}{j}\binom{n-w}{r-j} =
\qty((x+y)^{n-w}(x-y)^{w})[n-r,r] \defeq K_r(w).
\end{equation}
In \autoref{eqn:Krawtchouk}, $g(x,y)[n-r,r]$ stands for the
coefficient of the monomial $x^{n-r}y^r$ in $g(x,y)$, and
$K_r(w)$ is known as the \emph{Krawtchouk polynomial} \cite{SmallestEigenvalue}.
So the eigenvalue $\lambda(\mathbf{v})$ depends only on the Hamming weight $w$
of $\mathbf{v}$, and $K_r(w)$, $w \in [0,n]$ are all  eigenvalues of
$\cay(\ZZ_2^n,L_r)$ without counting multiplicities.
The largest eigenvalue of $\cay(\ZZ_2^n,L_r)$ is given by
$$
\lambda(\mathbf{0}) = K_r(0) = \card{L_r} = \binom{n}{r} = (-1)^r K_r(n).
$$
In particular, $K_r(0)$ and $K_r(n)$ are eigenvalues of the largest absolute
value. Moreover, when $r$ is odd, $K_r(n)=-\binomial{n}{r}$ is the smallest
eigenvalue of $\cay(\ZZ_2^n,L_r)$. For even $r\geq \frac{n}{2}$, the smallest
eigenvalue of $\cay(\ZZ_2^n,L_r)$ has been determined in
\cite{SmallestEigenvalue}. When $n\equiv 0\pmod 4$ and $r=\frac{n}{2}$, the
spectral lower bound shows $\chi_q(O_{n,2}) \geq n$, and hence $\chi_q(O_{n,2})
= n$ \cite{ExactHadamardGraphs}.

\subsubsection{$p=3$}\label{subsec:coding_approach}
Let $n=3l$. For simplicity, let $B_3 \defeq \ZZ_3^n(l,l,l)$ be
the set of balanced vectors in $\ZZ_3^n$. Then $O_{n,3} \cong \cay(\ZZ_3^n,B_3)$.
For each $\mathbf{v} \in \ZZ_3^n$ and  $a \in \ZZ_3$, let
\begin{equation}
B_3(\mathbf{v},a) \defeq \set{\mathbf{b} \in B_3}{\mathbf{v} \cdot \mathbf{b} = a}.
\end{equation}
Clearly, $ \mathbf{b} \mapsto -\mathbf{b}$ is a bijection between $B_3(\mathbf{v},1)$ and $B_3(\mathbf{v},2)$.
Thus, $\card{B_3(\mathbf{v},1)} = \card{B_3(\mathbf{v},2)}$ and
$\card{B_3(\mathbf{v},0)} + 2\card{B_3(\mathbf{v},1)} = \card{B_3} =
\binomial{n}{l,l,l}$. Then by
\autoref{eqn:eigenvalue_Cayley_graph}, we have the eigenvalue
\begin{align}
\lambda({\mathbf{v}})
&= \sum_{\mathbf{b} \in B_3(\mathbf{v},0)} \zeta_3^{\mathbf{v} \cdot \mathbf{b}} +
\sum_{\mathbf{b} \in B_3(\mathbf{v},1)} \qty(\zeta_3^{\mathbf{v} \cdot \mathbf{b}} +
\zeta_3^{-\mathbf{v} \cdot \mathbf{b}}) \\
&= \card{B_3(\mathbf{v},0)} - \card{B_3(\mathbf{v},1)} \\
&= \frac{3}{2}\card{B_3(\mathbf{v},0)} - \frac{1}{2}\card{B_3}. \label{eqn:eigenvalue3_code}
\end{align}

Now, we introduce some notions from coding theory to express
\autoref{eqn:eigenvalue3_code}. An $[n,k,d]$ ternary code $C$ is a
$\ZZ_3$-linear subspace of $\ZZ_3^n$ of dimension $k$ and minimum Hamming weight
$d$. The dual code of $C$ is defined as
\begin{equation}
	\label{eqn:dual_code}
	\dual{C} \defeq \set{\mathbf{v} \in \ZZ_3^n}{\forall~ \mathbf{c} \in C,
	\mathbf{v} \cdot \mathbf{c} = 0}.
\end{equation}
The (complete) weight enumerator of $C$ is defined as
\begin{equation}
	\label{eqn:weight_enumerator}
	A_C(x,y,z) \defeq \sum_{\mathbf{c} \in C}
	x^{\wt_0(\mathbf{c})} y^{\wt_1(\mathbf{c})} z^{\wt_2(\mathbf{c})}
	\defeq \sum_{r+s+t = n}A_C[r,s,t]x^ry^sz^t,
\end{equation}
where $\wt_0,\wt_1,\wt_2$ are defined in \autoref{eqn:type}, and $A_C[r,s,t]$ is
the coefficient of the monomial $x^ry^sz^t$ in $A_C(x,y,z)$, which is also
 the number of codewords in $C$ of type $(r,s,t)$. The weight enumerators
of $C$ and $\dual{C}$
are related by the MacWilliams transform \cite{SelfDual} as follows
\begin{equation}
	\label{eqn:MacWilliams}
A_{\dual{C}}(x,y,z) = \frac{1}{\card{C}}A_C(x+y+z,x+\zeta_3 y + \zeta_3^2 z,
x+\zeta_3^2 y + \zeta_3 z).
\end{equation}
With these notions, we express \autoref{eqn:eigenvalue3_code} in
terms of coefficients of the weight enumerator of some code in the following.
For $\mathbf{v} \in \ZZ_3^n$, let
\begin{equation}
C_\mathbf{v} \defeq \begin{cases}
	\setnd{\mathbf{0}}, & \mathbf{v} = \mathbf{0}, \\
	\setnd{\mathbf{0},\mathbf{v},-\mathbf{v}}, & \mathbf{v} \neq \mathbf{0},
\end{cases}
\end{equation}
be the code generated by $\mathbf{v}$. Note that $C_{\mathbf{v}}$ is of
dimension $0$ when $\mathbf{v} = 0$, and dimension $1$ when $\mathbf{v} \neq
0$. Assume that $T(\mathbf{v}) = (t_0,t_1,t_2)$, then
\begin{equation}
A_{C_\mathbf{v}}(x,y,z) = \begin{cases}
x^n & \mathbf{v} = \mathbf{0} \\
x^n + x^{t_0}y^{t_1}z^{t_2} + x^{t_0}y^{t_2}z^{t_1} & \mathbf{v} \neq \mathbf{0}.
\end{cases}
\end{equation}
Clearly, $A_{C_\mathbf{v}}(x,y,z)$ and $A_{\dual{C_\mathbf{v}}}(x,y,z)$ depend only on
the type $T(\mathbf{v}) = (t_0,t_1,t_2)$ of $\mathbf{v}$, so we define
\begin{align}
A_{(t_0,t_1,t_2)}(x,y,z) &\defeq A_{C_\mathbf{v}}(x,y,z), \\
A_{\dual{(t_0,t_1,t_2)}}(x,y,z) &\defeq A_{\dual{C_\mathbf{v}}}(x,y,z).
\end{align}
By definition, $\card{B_3(\mathbf{v},0)} =
A_{\dual{C_{\mathbf{v}}}}[l,l,l]$.
Next, we encode $\card{B_3(\mathbf{v},0)}$ for all $\mathbf{v} \in \ZZ_3^n$ into
the weight enumerator of a certain code by the following lemma.
\begin{lemma}
	\label{lemma:duality}
Let $(s_0,s_1,s_2)$ and $(t_0,t_1,t_2)$ be two types of vectors in $\ZZ_3^n$.
Then
\begin{equation}
	\label{eqn:duality}
\binom{n}{s_0,s_1,s_2} \cdot A_{\dual{(s_0,s_1,s_2)}}[t_0,t_1,t_2] =
\binomial{n}{t_0,t_1,t_2} \cdot A_{\dual{(t_0,t_1,t_2)}}[s_0,s_1,s_2].
\end{equation}
\end{lemma}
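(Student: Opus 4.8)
The plan is to prove \autoref{eqn:duality} by double-counting the set of ordered, mutually orthogonal pairs of vectors of the two prescribed types.

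First I would unwind what the quantities on the two sides count. Since $C_{\mathbf v}$ is spanned by $\mathbf v$ (with $C_{\mathbf 0}=\setnd{\mathbf 0}$), its dual code is simply the orthogonal hyperplane of $\mathbf v$ (or all of $\ZZ_3^n$ when $\mathbf v=\mathbf 0$):
\[ \dual{C_{\mathbf v}} = \set{\mathbf w \in \ZZ_3^n}{\mathbf w \cdot \mathbf v = 0}, \]
because $\mathbf w\cdot(-\mathbf v)=0$ is implied by $\mathbf w\cdot\mathbf v=0$. Consequently, for any fixed $\mathbf v$ of type $(s_0,s_1,s_2)$,
\[ A_{\dual{(s_0,s_1,s_2)}}[t_0,t_1,t_2] = \card{\set{\mathbf w \in \ZZ_3^n}{T(\mathbf w)=(t_0,t_1,t_2),\ \mathbf v\cdot\mathbf w=0}}. \]
I would then recall (as already observed in the text) that the right-hand side does not depend on which $\mathbf v$ of type $(s_0,s_1,s_2)$ is chosen: two such vectors differ by a coordinate permutation $\pi$, and $\mathbf w\mapsto\pi(\mathbf w)$ preserves both types and the inner product, hence is a bijection between the two corresponding solution sets.

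With this in hand, set
\[ N \defeq \card{\set{(\mathbf v,\mathbf w) \in \ZZ_3^n\times\ZZ_3^n}{T(\mathbf v)=(s_0,s_1,s_2),\ T(\mathbf w)=(t_0,t_1,t_2),\ \mathbf v\cdot\mathbf w=0}}. \]
Counting $N$ by first choosing $\mathbf v$: there are $\binom{n}{s_0,s_1,s_2}$ admissible $\mathbf v$, and for each the number of admissible $\mathbf w$ is the constant $A_{\dual{(s_0,s_1,s_2)}}[t_0,t_1,t_2]$, so $N=\binom{n}{s_0,s_1,s_2}\cdot A_{\dual{(s_0,s_1,s_2)}}[t_0,t_1,t_2]$. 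Counting $N$ by first choosing $\mathbf w$ and using symmetry of the inner product gives, in the same way, $N=\binom{n}{t_0,t_1,t_2}\cdot A_{\dual{(t_0,t_1,t_2)}}[s_0,s_1,s_2]$. Equating the two evaluations of $N$ is exactly \autoref{eqn:duality}.

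The argument is essentially bookkeeping, and the only point that needs a moment's care is the type-invariance of $A_{\dual{C_{\mathbf v}}}$: it is precisely this invariance that lets the inner count over $\mathbf w$ be pulled out as a constant factor, so that each way of counting $N$ collapses to a single product rather than a genuine sum. Everything else is the standard observation that counting a set of pairs in two orders gives the same total.
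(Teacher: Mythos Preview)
Your proof is correct and is essentially the same double-counting argument as the paper's: the paper phrases it as counting edges in the bipartite graph on $\ZZ_3^n(s_0,s_1,s_2)\cup\ZZ_3^n(t_0,t_1,t_2)$ with adjacency given by $\mathbf v\cdot\mathbf w=0$, noting that each side is regular, while you count ordered orthogonal pairs in two ways. Your extra remark justifying the type-invariance of $A_{\dual{C_{\mathbf v}}}$ via coordinate permutations is a welcome detail that the paper leaves implicit.
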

\begin{proof}
Consider the following bipartite graph on the vertices $\ZZ_3^n(s_0,s_1,s_2)
\cup \ZZ_3^n(t_0,t_1,t_2)$, where $\mathbf{v} \in \ZZ_3^n(s_0,s_1,s_2)$ and
$\mathbf{w} \in \ZZ_3^n(t_0,t_1,t_2)$ are adjacent if and only if $\mathbf{v}
\cdot \mathbf{w} = 0$. The degree of $\mathbf{v}$ in this bipartite graph is
exactly $A_{\dual{(s_0,s_1,s_2)}}[t_0,t_1,t_2]$. Similarly, the degree of
$\mathbf{w}$ is exactly $A_{\dual{(t_0,t_1,t_2)}}[s_0,s_1,s_2]$. Therefore, we
have that
$$
\card{\ZZ_3^n(s_0,s_1,s_2)} \cdot A_{\dual{(s_0,s_1,s_2)}}[t_0,t_1,t_2]
= \card{\ZZ_3^n(t_0,t_1,t_2)} \cdot A_{\dual{(t_0,t_1,t_2)}}[s_0,s_1,s_2].
$$
The proof is completed by noting that $\card{\ZZ_3^n(s_0,s_1,s_2)} =
\binom{n}{s_0,s_1,s_2}$ and $\card{\ZZ_3^n(t_0,t_1,t_2)} =
\binom{n}{t_0,t_1,t_2}$.
\end{proof}
Now, with \autoref{lemma:duality},
\begin{equation}
	\label{eqn:size_balanced_v_dual}
\card{B_3(\mathbf{v},0)} = A_{\dual{T(\mathbf{v})}}[l,l,l]
= \frac{\binom{n}{l,l,l}}{\binom{n}{T(\mathbf{v})}}
A_{\dual{(l,l,l)}}[T(\mathbf{v})].
\end{equation}
Note that $A_{(l,l,l)}(x,y,z) = x^n + 2(xyz)^{n/3}$. Apply the MacWilliams
transform (\autoref{eqn:MacWilliams}),
\begin{equation}
	\label{eqn:weight_enumerator_dual_balanced}
A_{\dual{(l,l,l)}}(x,y,z) = \frac{1}{3}
\qty(
(x+y+z)^n + 2(x^3+y^3+z^3-3xyz)^{n/3}
).
\end{equation}
Finally, by \autoref{eqn:eigenvalue3_code}, \autoref{eqn:size_balanced_v_dual}
and \autoref{eqn:weight_enumerator_dual_balanced}, we conclude that
\begin{align}
\lambda({\mathbf{v}}) &=
\frac{1}{2}\frac{\binom{n}{l,l,l}}{\binomial{n}{T(\mathbf{v})}}
\qty((x+y+z)^n+2(x^3+y^3+z^3-3xyz)^{n/3})[T(\mathbf{v})] - \frac{1}{2}\card{B_3} \\
&= \frac{\binom{n}{l,l,l}}{\binomial{n}{T(\mathbf{v})}}
(x^3+y^3+z^3-3xyz)^{n/3}[T(\mathbf{v})]. \label{eqn:lambda_v}
\end{align}
From \autoref{eqn:lambda_v}, $\lambda({\mathbf{v}})$ depends only on the type
of $\mathbf{v}$, so we write
\begin{equation}
	\label{eqn:lambda_T}
	\lambda(T(\mathbf{v})) \defeq \lambda({\mathbf{v}}).
\end{equation}
Moreover, as the polynomial in \autoref{eqn:lambda_v} is symmetric, for any type
$(t_0,t_1,t_2)$ of vectors in $\ZZ_3^n$,
$\lambda(t_0,t_1,t_2)$ is independent of the order of $t_0,t_1,t_2$. So we can always
assume that $t_0 \le t_1 \le t_2$.

\section{Eigenvalues of $O_{3l,3}$ with Large Absolute Values}
\label{sec:3}
Let $n=3l$. It is easy to check that  $\lambda(0,0,n) = \binom{n}{l,l,l}$ is the
largest eigenvalue of $O_{n,3}$. Further, $\lambda(0,0,n)$ is of the largest
absolute value among all eigenvalues of $O_{n,3}$. In this section, we show that
for each $n=3l$,  $\lambda(1,1,n-2)$ is negative and with the second largest
absolute value, and thus it is the smallest eigenvalue of $O_{n,3}$;
$\lambda(2,2,n-4)$ is positive and with the third largest absolute value, thus
it is the second largest eigenvalue of $O_{n,3}$.

Note that during the preparation of this paper,  an asymptotic version of
\autoref{theorem:goal} was established in the recent online paper \cite{SDU}. In
fact, in \cite{SDU}, the second largest absolute value of
$\cay(\ZZ_p^{lp},\ZZ_p^{lp}(l,l,\dots,l))$ was determined for $l$ large enough
with respect to $p$ and $(p-1)l$ even, where $p \ge 2$ is any positive integer.
As our result is for any $n$ divided by $3$ when $p=3$,  we decided to reserve a
place for \autoref{theorem:goal} in this paper.

\begin{theorem}
	\label{theorem:goal}
Let $n = 3l$ and $(t_0,t_1,t_2)$ be a type of vectors in $\ZZ_3^n$ such that $0
\le t_0 \le t_1 \le t_2 \le n$ and $(t_0,t_1,t_2) \neq (0,0,n)$. Then,
\begin{equation}
	\label{eqn:abt012}
\abs{\lambda(t_0,t_1,t_2)} \leq  \abs{\lambda(1,1,n-2)}.
\end{equation}
In particular, $\lambda(1,1,n-2)=-\frac{\binom{n}{l,l,l}}{n-1}$ is the smallest
eigenvalue of $O_{n,3}$.
\end{theorem}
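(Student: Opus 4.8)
The plan is to start from the closed form in \autoref{eqn:lambda_v}: writing $n=3l$, $N_l:=\binom{n}{l,l,l}$, $P:=x^3+y^3+z^3-3xyz$, and $c_l(t_0,t_1,t_2):=P^l[t_0,t_1,t_2]$ for the coefficient of $x^{t_0}y^{t_1}z^{t_2}$ in $P^l$, one has $\lambda(t_0,t_1,t_2)=\frac{N_l}{\binom{n}{t_0,t_1,t_2}}\,c_l(t_0,t_1,t_2)$; I write $\lambda=\lambda_l$ when I need to display the dependence on $l$. Since every monomial of $P^l$ has its three exponents mutually congruent mod $3$, $c_l(t_0,t_1,t_2)=0$ unless $t_0\equiv t_1\equiv t_2\pmod 3$, so I may assume this; and reading off the coefficient of $x^1y^1z^{\,n-2}$ gives $c_l(1,1,n-2)=-3l$, hence $\lambda(1,1,n-2)=-\tfrac{N_l}{n-1}$. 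As $\lambda(0,0,n)=N_l>0$ is the unique eigenvalue of absolute value exceeding $\tfrac{N_l}{n-1}$, once \autoref{eqn:abt012} is proved it follows that $\lambda(1,1,n-2)$ is the smallest eigenvalue; so the whole task is \autoref{eqn:abt012}.

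The main tool is a recursion in $l$. From $P^l=P\cdot P^{l-1}$ and the fact that $P[a,b,c]$ is $1$ on $(3,0,0),(0,3,0),(0,0,3)$, is $-3$ on $(1,1,1)$, and is $0$ otherwise, one gets $c_l(t_0,t_1,t_2)=c_{l-1}(t_0-3,t_1,t_2)+c_{l-1}(t_0,t_1-3,t_2)+c_{l-1}(t_0,t_1,t_2-3)-3\,c_{l-1}(t_0-1,t_1-1,t_2-1)$. Using $\binom{n-3}{t_0-3,t_1,t_2}/\binom{n}{t_0,t_1,t_2}=\tfrac{t_0(t_0-1)(t_0-2)}{n(n-1)(n-2)}$ (and its analogues for $t_1,t_2$), $\binom{n-3}{t_0-1,t_1-1,t_2-1}/\binom{n}{t_0,t_1,t_2}=\tfrac{t_0t_1t_2}{n(n-1)(n-2)}$, and $N_l/N_{l-1}=\tfrac{n(n-1)(n-2)}{l^3}$, this becomes
\begin{equation*}
\lambda_l(T)=\frac{1}{l^3}\left[\sum_{i\in\{0,1,2\}}t_i(t_i-1)(t_i-2)\,\lambda_{l-1}(T-3\mathbf{e}_i)\;-\;3t_0t_1t_2\,\lambda_{l-1}(T-\mathbf{1})\right],
\end{equation*}
where $T=(t_0,t_1,t_2)$, $\mathbf{e}_i$ is the $i$-th coordinate vector, $\mathbf{1}=(1,1,1)$, a term whose argument has a negative entry is treated as $0$ (then the prefactor also vanishes), and $\lambda_{l-1}$ is a function of the sorted type.

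Next I would prove, by induction on $l$, that $\rho_l:=\max\{\,\abs{\lambda_l(T)}:T\text{ a type of }\ZZ_3^{3l},\ T\neq(0,0,3l)\,\}$ equals $\tfrac{N_l}{n-1}$, checking $l\le 2$ directly. In the step, fix $T=(t_0,t_1,t_2)\neq(0,0,n)$ with $t_0\le t_1\le t_2$ and all $t_i$ congruent mod $3$. The delicate point is that $\lambda_{l-1}(0,0,n-3)=N_{l-1}$ dwarfs $\rho_{l-1}=\tfrac{N_{l-1}}{n-4}$; but inspecting the four arguments in the recursion shows that, besides the excluded $(0,0,n)$, the argument $(0,0,n-3)$ shows up only for $T=(1,1,n-2)$ (last term) and $T=(0,3,n-3)$ (second term). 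For every other $T$, bounding each $\lambda_{l-1}(\cdot)$ by $\rho_{l-1}$ gives $\abs{\lambda_l(T)}\le\tfrac{\rho_{l-1}}{l^3}f(T)$, where $f(T):=t_0(t_0-1)(t_0-2)+t_1(t_1-1)(t_1-2)+t_2(t_2-1)(t_2-2)+3t_0t_1t_2$; since $\tfrac{N_l}{n-1}=\tfrac{n(n-2)N_{l-1}}{l^3}$, the desired $\abs{\lambda_l(T)}\le\tfrac{N_l}{n-1}$ reduces to the elementary inequality $f(t_0,t_1,t_2)\le n(n-2)(n-4)$ for all such $T$. For the two exceptional types one plugs in $\lambda_{l-1}(0,0,n-3)=N_{l-1}$ and bounds the remaining term(s) by $\rho_{l-1}$; a short computation then gives $\abs{\lambda_l(T)}\le\tfrac{N_l}{n-1}$, with equality at $T=(1,1,n-2)$. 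This closes the induction, whence $\abs{\lambda_l(t_0,t_1,t_2)}\le\rho_l=\tfrac{N_l}{n-1}=\abs{\lambda_l(1,1,n-2)}$ for all $T\neq(0,0,n)$, which is \autoref{eqn:abt012}.

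The main obstacle is the inequality $f(T)\le n(n-2)(n-4)$. By Newton's identities $f=n(n-1)(n-2)-3(n-2)e_2+6e_3$, where $e_2,e_3$ are the elementary symmetric functions of $t_0,t_1,t_2$, so it is equivalent to $(n-2)(e_2-n)\ge 2e_3$. If $t_0=0$, then $e_3=0$ and $T\neq(0,0,n)$ forces $t_1$ to be a positive multiple of $3$, so $e_2-n=t_1t_2-n\ge 3(n-3)-n\ge 0$ for $n\ge 6$. If $t_0\ge1$, write $a_i=t_i-1\ge0$ (so $\sum_i a_i=n-3$); a short computation reduces the claim to $(n-4)(a_0a_1+a_1a_2+a_2a_0)-2a_0a_1a_2\ge -(n-2)(n-5)$, and in fact the left-hand side is $\ge0$ for $n\ge6$: as a symmetric function on the simplex $\{\,a_i\ge0,\ \sum_i a_i=n-3\,\}$ its minimum is attained either on a facet $a_i=0$ (where it is obviously non-negative) or at a point with two of the $a_i$ equal, where it simplifies to $a\bigl(4a^2-(5n-18)a+2(n-4)(n-3)\bigr)$, whose quadratic factor has negative discriminant, hence is positive, for $n\ge6$. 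This analysis is elementary but is the most computational part; everything else is bookkeeping with the recursion.
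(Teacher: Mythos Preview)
Your argument is correct and shares the same backbone as the paper's: exploit the recursion coming from $P^l=P\cdot P^{l-1}$, induct on $l$, and reduce everything to the elementary inequality $f(T)\le n(n-2)(n-4)$. The tactical choices differ. The paper first passes to the auxiliary coefficients $h(T):=(x^3+y^3+z^3+3xyz)^l[T]$ (so that all signs disappear), then treats every type with $t_0\le2$ as a base case by direct computation; in the inductive step this forces all $t_i\ge3$, so none of the four recursive arguments can ever be the dangerous $(0,0,n-3)$, and the inequality $f\le n(n-2)(n-4)$ is only needed on the region $t_i\ge3$, where a clean monotonicity argument (with $g(3,3,3)=216$) suffices. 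You instead keep $\lambda$ itself, isolate the two exceptional types $(1,1,n-2)$ and $(0,3,n-3)$ whose recursion feeds into $(0,0,n-3)$, handle those by hand, and must then establish the inequality over the full range of admissible $T$, which costs you the extra simplex analysis at the end. Both routes work; the paper's choice of base cases is slightly more economical. One minor point: your assertion that the minimum of a symmetric function on the simplex is attained on a facet or where two coordinates coincide is not a general fact and deserves a line of justification here (it does hold for this particular function, as the Lagrange condition shows); a quicker alternative is the trivial bound $e_1(a)\,e_2(a)\ge 3e_3(a)$, which gives $(n-4)e_2(a)-2e_3(a)\ge\tfrac{n-6}{3}\,e_2(a)\ge0$ for $n\ge6$ in one stroke.
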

\begin{proof}
By \autoref{eqn:lambda_v} and \autoref{eqn:lambda_T}, we have
\begin{equation}
	\label{eqn:t012}
\lambda(t_0,t_1,t_2) = \frac{\binom{n}{l,l,l}}{\binom{n}{t_0,t_1,t_2}}
\qty(x^3+y^3+z^3-3xyz)^l[t_0,t_1,t_2].
\end{equation}
Especially,
\begin{equation}
	\label{eqn:lambda_smallest}
\lambda(1,1,n-2)
=\frac{\binom{n}{l,l,l}}{\binom{n}{1,1,n-2}}(-3l)=-\frac{\binom{n}{l,l,l}}{n-1}<0.
\end{equation}
It is clear that
\begin{equation}
\label{eqn:ubt012}
\abs{\lambda(t_0,t_1,t_2)} \le \frac{\binom{n}{l,l,l}}{\binom{n}{t_0,t_1,t_2}}
\qty(x^3+y^3+z^3+3xyz)^l[t_0,t_1,t_2],
\end{equation} with equality holds when $t_0\leq 2$.
Denote $h(t_0,t_1,t_2) \defeq \qty(x^3+y^3+z^3+3xyz)^l[t_0,t_1,t_2]$. If  $t_0 \equiv t_1 \equiv t_2 \pmod{3}$ does not hold, $h(t_0,t_1,t_2)=\lambda(t_0,t_1,t_2)=0$.
So it suffices to show that
\begin{equation}
\label{eqn:ub_abs_lambda_le}
h(t_0,t_1,t_2)\leq \frac{\binom{n}{t_0,t_1,t_2}}{n-1}
\end{equation} when $t_0 \equiv t_1 \equiv t_2 \pmod{3}$.
We will prove \autoref{eqn:ub_abs_lambda_le} by induction on $n$. The base cases
when $t_0\leq 2$ can be obtained by \autoref{cl1} combining the fact that the
equality in \autoref{eqn:ubt012} holds when $t_0\leq 2$.

\begin{claim}\label{cl1} 
When $t_0\leq 2$, $t_0 \equiv t_1 \equiv t_2 \pmod{3}$,
$t_0\leq t_1\leq t_2$ and $(t_0,t_1,t_2)\neq (0,0,n)$,
$\abs{\lambda(t_0,t_1,t_2)} \leq  \abs{\lambda(1,1,n-2)}$.
\end{claim}
\begin{proof}We only prove the case when $t_0=0$, while the proofs for $t_0=1,2$ are similar and can be found in  \autoref{appsec:cl1}.

Assume that  $t_0=0$, $t_1 \equiv t_2\equiv 0\pmod{3}$ and $0< t_1\leq t_2$.
By \autoref{eqn:t012}, $\lambda(0,t_1,t_2)=\frac{\binom{n}{l,l,l}}{\binom{n}{t_1}}\binom{l}{t_1/3}$. Then $\abs{\lambda(0,t_1,t_2)} \le
\abs{\lambda(1,1,n-2)}$ is equivalent to that
$
n-1 \le \frac{\binom{n}{t_1}}{\binom{l}{t_1/3}}.
$
Let $f(t) \defeq \frac{\binom{n}{t}}{\binom{l}{t/3}}$, then for
$t \equiv 0 \pmod{3}$ such that $0 < t$ and $t+3 \le n/2$,
$$
\frac{f(t+3)}{f(t)} = \frac{(n-t-1)(n-t-2)}{(t+2)(t+1)} \ge 1.
$$
Therefore, $f(t)$ is increasing with respect to $t$, and
$
f(t) \ge f(3) = \frac{(n-1)(n-2)}{2} \ge n-1
$
for $n \ge 4$.
\end{proof}

Now assume that  $t_0\geq 3$, $t_0\equiv t_1 \equiv t_2
\pmod{3}$  and $t_0\leq t_1\leq t_2$.
Then
\begin{align*}
h(t_0, t_1, t_2) 
&= 3h(t_0 - 1, t_1 - 1, t_2 - 1) + h(t_0 - 3, t_1, t_2) \\ 
&\qquad + h(t_0, t_1 - 3, t_2) + h(t_0, t_1, t_2 - 3) \\
&\leq \frac{1}{n-4} \left( 3 \binom{n-3}{t_0-1, t_1-1, t_2-1} + \binom{n-3}{t_0-3, t_1, t_2} \right. \\
&\qquad \left. + \binom{n-3}{t_0, t_1-3, t_2} + \binom{n-3}{t_0, t_1, t_2-3} \right),
\end{align*}
where the inequality follows from the induction hypothesis. To get
\autoref{eqn:ub_abs_lambda_le}, it then suffices to have
$$
3t_0t_1t_2 + t_0(t_0-1)(t_0-2) + t_1(t_1-1)(t_1-2) + t_2(t_2-1)(t_2-2) \le (n-4)(n-2)n.
$$
Let
$$
g(t_0,t_1,t_2) \defeq (n-4)(n-2)n - t_0(t_0-1)(t_0-2) - t_1(t_1-1)(t_1-2) - t_2(t_2-1)(t_2-2) - 3t_0t_1t_2,
$$ where $n=t_0+t_1+t_2$.
It suffices to show that $g(t_0,t_1,t_2) \ge 0$ for all $t_0,t_1,t_2 \ge 3$. We think of $t_1$
and $t_2$ as being fixed, and show that $g(t_0,t_1,t_2)$ is increasing with respect to
$t_0$. By calculation
$$
g(t_0,t_1,t_2) - g(t_0-1,t_1,t_2) = 3t_1^2-15t_1 + 3t_2^2-15t_2 + 3t_1t_2 + 9 + t_0(6t_1+6t_2-6).
$$
When $t_0,t_1,t_2 \ge 3$, we have $g(t_0,t_1,t_2) - g(t_0-1,t_1,t_2) \ge 0$.
Thus, $g(t_0,t_1,t_2)$ is increasing with respect to $t_0 \ge 3$, for fixed
$t_1,t_2 \ge 3$. By the symmetry of $t_0,t_1,t_2$, $g(t_0,t_1,t_2)$ is
increasing with respect to $t_0,t_1,t_2 \ge 3$, respectly. In conclusion, for
$t_0,t_1,t_2 \ge 3$,
$
g(t_0,t_1,t_2) \ge g(3,3,3) = 216 > 0.
$
The proof is completed.
\end{proof}

\autoref{theorem:goal} immediately implies that $\lambda(1,1,3l-2)$ is the smallest
eigenvalue of $O_{3l,3}$. Together with the spectral lower bound on quantum chromatic
numbers, $\chi_q(O_{3l,3})$ can be determined for any $l$.
\begin{corollary}
	\label{cor:3l}
$\chi_{q}(O_{3l,3}) = 3l$ for all $l \ge 1$.
\end{corollary}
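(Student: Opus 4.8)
The plan is to combine the upper bound from the flat orthogonal representation with the spectral lower bound, feeding in the value of the smallest eigenvalue supplied by \autoref{theorem:goal}. For the upper bound, recall from \autoref{eqn:ub} that $O_{n,p}$ carries a natural flat orthogonal representation over $\CC^{n}$, so with $n = 3l$ we immediately get $\chi_q(O_{3l,3}) \le 3l$. The substantive direction is the matching lower bound $\chi_q(O_{3l,3}) \ge 3l$.

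For the lower bound, I would invoke the spectral inequality \autoref{eqn:spectral_lb}, namely $\chi_q(G) \ge 1 - \lambda_{\max}(G)/\lambda_{\min}(G)$, applied to $G = O_{3l,3} \cong \cay(\ZZ_3^{3l}, B_3)$. Here two eigenvalue facts are needed. First, as noted at the beginning of \autoref{sec:3}, the largest eigenvalue is $\lambda_{\max}(O_{3l,3}) = \lambda(0,0,n) = \binom{n}{l,l,l}$; this is clear since it equals $\card{B_3}$, the degree of the regular graph. Second, by \autoref{theorem:goal}, the smallest eigenvalue is $\lambda_{\min}(O_{3l,3}) = \lambda(1,1,n-2) = -\binom{n}{l,l,l}/(n-1)$. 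Plugging these in gives
\begin{equation*}
\chi_q(O_{3l,3}) \ge 1 - \frac{\binom{n}{l,l,l}}{-\binom{n}{l,l,l}/(n-1)} = 1 + (n-1) = n = 3l.
\end{equation*}
Combining with the upper bound yields $\chi_q(O_{3l,3}) = 3l$.

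There is essentially no obstacle left at this stage: all the difficulty has been absorbed into \autoref{theorem:goal}, whose proof already pins down which eigenvalue is smallest. The only points worth a sentence of care are that $O_{3l,3}$ is vertex-transitive and regular (so its adjacency spectrum is real and the spectral bound applies cleanly), and that the computation is valid in the smallest case $l=1$ as well: there $n=3$, $\lambda_{\max} = \binom{3}{1,1,1} = 6$, $\lambda_{\min} = -6/(n-1) = -3$, and $1 - 6/(-3) = 3$, matching $\chi_q(O_{3,3}) \le 3$. Hence the corollary holds for every $l \ge 1$.
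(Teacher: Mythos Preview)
Your proposal is correct and follows essentially the same approach as the paper: the upper bound from \autoref{eqn:ub}, the spectral lower bound \autoref{eqn:spectral_lb}, and the identification of $\lambda_{\max}=\binom{n}{l,l,l}$ and $\lambda_{\min}=-\binom{n}{l,l,l}/(n-1)$ via \autoref{theorem:goal}. Your explicit verification of the case $l=1$ and the remark on vertex-transitivity are harmless additions, not deviations.
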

Before ending this section, we determine the eigenvalue with the third largest
absolute value, which is the second largest eigenvalue of $O_{n,3}$.

\begin{theorem}
\label{theorem:third_largest_eigenvalue_3}
Let $n = 3l \ge 9$ and $(t_0,t_1,t_2)$ be a type of vectors in $\ZZ_3^n$ such that
$0 \le t_0 \le t_1 \le t_2 \le n$ and $(t_0,t_1,t_2) \notin
\setnd{(0,0,n), (1,1,n-2)}$. Then,
$$
\abs{\lambda(t_0,t_1,t_2)} \le \abs{\lambda(2,2,n-4)}.
$$
In particular,
$
\lambda(2,2,n-4) = \frac{2\binom{n}{l,l,l}}{(n-1)(n-2)}
$
is the second largest eigenvalue of $O_{n,3}$.
\end{theorem}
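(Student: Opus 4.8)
The plan is to follow the proof of \autoref{theorem:goal} essentially line by line, replacing the bound for the smallest eigenvalue with the stronger bound needed here. First I would record the value of $\lambda(2,2,n-4)$: by \autoref{eqn:t012}, the only monomial of $(x^3+y^3+z^3-3xyz)^l$ contributing to $x^2y^2z^{n-4}$ is $(-3xyz)^2(z^3)^{l-2}$, so $(x^3+y^3+z^3-3xyz)^l[2,2,n-4]=9\binom{l}{2}$; dividing by $\binom{n}{2,2,n-4}=\tfrac14 n(n-1)(n-2)(n-3)$ and using $n=3l$ yields $\lambda(2,2,n-4)=\tfrac{2}{(n-1)(n-2)}\binom{n}{l,l,l}>0$. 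Setting $h(t_0,t_1,t_2)\defeq(x^3+y^3+z^3+3xyz)^l[t_0,t_1,t_2]$ as in the proof of \autoref{theorem:goal}, \autoref{eqn:ubt012} gives $\abs{\lambda(t_0,t_1,t_2)}\le\tfrac{\binom{n}{l,l,l}}{\binom{n}{t_0,t_1,t_2}}h(t_0,t_1,t_2)$, and $h$ vanishes unless $t_0\equiv t_1\equiv t_2\pmod 3$; hence it suffices to prove
\[
h(t_0,t_1,t_2)\;\le\;\frac{2\binom{n}{t_0,t_1,t_2}}{(n-1)(n-2)}
\]
for all types $0\le t_0\le t_1\le t_2$ with $t_0\equiv t_1\equiv t_2\pmod 3$ and $(t_0,t_1,t_2)\notin\setnd{(0,0,n),(1,1,n-2)}$. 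Granting this, $\lambda(2,2,n-4)$ is the second largest eigenvalue: it is itself an eigenvalue, $\lambda(0,0,n)=\binom{n}{l,l,l}>\lambda(2,2,n-4)$, $\lambda(1,1,n-2)=-\tfrac{\binom{n}{l,l,l}}{n-1}<\lambda(2,2,n-4)$, and every remaining type satisfies $\lambda(t_0,t_1,t_2)\le\abs{\lambda(t_0,t_1,t_2)}\le\lambda(2,2,n-4)$.

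I would prove the displayed bound by induction on $n$, where the case $t_0\le 2$ is handled directly (and already covers every admissible type when $n\le 6$), and for $n\ge 9$ the case $t_0\ge 3$ reduces to the instance $n-3\ge 6$. For a type with $t_0\le 2$, equality holds in \autoref{eqn:ubt012}, so the bound is equivalent to $\abs{\lambda(t_0,t_1,t_2)}\le\lambda(2,2,n-4)$, which I would check by the explicit computation used in \autoref{cl1}. For $t_0=0$ one has $\lambda(0,t_1,t_2)=\tfrac{\binom{n}{l,l,l}}{\binom{n}{t_1}}\binom{l}{t_1/3}$, so the bound reads $f(t_1)\ge\tfrac{(n-1)(n-2)}{2}$ for $f(t)\defeq\binom{n}{t}/\binom{l}{t/3}$; since $f(t+3)/f(t)=\tfrac{(n-t-1)(n-t-2)}{(t+1)(t+2)}\ge 1$ for $t\le(n-3)/2$ and $t_1\le t_2$ forces $t_1\le n/2$, one gets $f(t_1)\ge f(3)=\tfrac{(n-1)(n-2)}{2}$ whenever $t_1\ge 3$, the value $t_1=0$ being exactly the excluded type $(0,0,n)$. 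The cases $t_0=1$ and $t_0=2$ are analogous: for $t_0=1$ the relevant ratio at the smallest admissible value $t_1=4$ equals $\tfrac{(n-1)(n-2)(n-4)}{8}$, which is $\ge\tfrac{(n-1)(n-2)}{2}$ exactly because $n\ge 8$, and $t_1=1$ gives the excluded $(1,1,n-2)$; for $t_0=2$ the smallest admissible value $t_1=2$ gives $\binom{n-2}{2}=\tfrac{(n-2)(n-3)}{2}$, i.e.\ equality, attained by $(2,2,n-4)$ itself. This base analysis is where the hypothesis $n\ge 9$ enters and where the two exceptional types are singled out.

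For a type with $t_0\ge 3$ (hence $t_1,t_2\ge 3$ as well) I would use the recursion
\[
h(t_0,t_1,t_2)=3h(t_0-1,t_1-1,t_2-1)+h(t_0-3,t_1,t_2)+h(t_0,t_1-3,t_2)+h(t_0,t_1,t_2-3),
\]
obtained by peeling off one factor of $x^3+y^3+z^3+3xyz$. Each sub-type sums to $n-3\ge 6$, still satisfies the congruence, and -- since all three coordinates are $\ge 3$ -- none of them is a permutation of $(0,0,n-3)$ or of $(1,1,n-5)$, so the induction hypothesis applies to each. Rewriting $\binom{n-3}{t_0-1,t_1-1,t_2-1}=\tfrac{t_0t_1t_2}{n(n-1)(n-2)}\binom{n}{t_0,t_1,t_2}$, $\binom{n-3}{t_0-3,t_1,t_2}=\tfrac{t_0(t_0-1)(t_0-2)}{n(n-1)(n-2)}\binom{n}{t_0,t_1,t_2}$ and symmetrically, the target bound reduces to
\[
3t_0t_1t_2+t_0(t_0-1)(t_0-2)+t_1(t_1-1)(t_1-2)+t_2(t_2-1)(t_2-2)\;\le\;n(n-4)(n-5)
\]
for all integers $t_0,t_1,t_2\ge 3$, where $n=t_0+t_1+t_2$. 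Letting $g$ be the difference (right side minus left), a direct computation gives $g(t_0,t_1,t_2)-g(t_0-1,t_1,t_2)=3\bigl(2t_0(t_1+t_2-2)+(t_1+t_2)^2-7(t_1+t_2)+8-t_1t_2\bigr)$, which for $t_0,t_1,t_2\ge 3$ is at least $3\bigl(\tfrac34(t_1+t_2)^2-(t_1+t_2)-4\bigr)>0$ after using $t_1t_2\le\tfrac14(t_1+t_2)^2$; hence $g$ is increasing in each coordinate on $[3,\infty)$ and $g(t_0,t_1,t_2)\ge g(3,3,3)=9\cdot 5\cdot 4-81-18=81>0$, as required.

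The two recursions (for $h$ and, inside them, for multinomial coefficients) together with the final polynomial inequality are routine and essentially copy the corresponding steps of \autoref{theorem:goal}. The step I expect to demand the most care is the base-case analysis for $t_0\le 2$: one must evaluate the binomial and multinomial ratios at their minimal admissible arguments, verify the monotonicity that reduces an arbitrary admissible $t_1$ to that minimum, and confirm that the bound fails for \emph{exactly} the two types $(0,0,n)$ and $(1,1,n-2)$ -- it is this last point that pins down the threshold $n\ge 9$ in the statement.
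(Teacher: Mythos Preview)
Your proposal is correct and is essentially the paper's own proof: the reduction to $h(t_0,t_1,t_2)\le\frac{2\binom{n}{t_0,t_1,t_2}}{(n-1)(n-2)}$, the three base cases $t_0\in\{0,1,2\}$ handled via the same binomial-ratio monotonicity, the recursion for $t_0\ge3$, and the final polynomial inequality with $g(3,3,3)=81$ all match. Your write-up is in fact a bit more careful than the paper in verifying that the recursion never lands on an excluded type and in isolating where the hypothesis $n\ge9$ is used.
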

The proof of
\autoref{theorem:third_largest_eigenvalue_3} is similar to that of
\autoref{theorem:goal} and thus moved to
\autoref{appsec:third_largest_eigenvalue_3}. \autoref{theorem:third_largest_eigenvalue_3} will not be used in the rest of
this paper. But we hope that it will be useful for future study.

\section{Quantum Chromatic Numbers of Certain Subgraphs}
\label{sec:subgraphs}
In this section, we determine the quantum chromatic numbers for several classes
of Cayley graphs, which can be considered as subgraphs of  $O_{n,p}$ with
$p=2,3$. We will frequently use \autoref{eq:subg}, that is, if there is a
homomorphism from $G$ to $H$, then  $\chi_q(G) \le \chi_q(H)$.

First, we give a general result on graphs and subgraphs with the same quantum
chromatic number.
\begin{theorem}
	\label{theorem:subgraph_gener}
For any integer $p$ and ordered non-negative $p$-partition
$(t_0,t_1,\ldots,t_{p-1})$ of $n$ with $t_i=t_{p-i}$ for $i\in [\floor{p/2}]$
let $G=\cay(\ZZ_p^{n},\ZZ_p^{n}(t_0,t_1,\ldots,t_{p-1}))$ and
$H=\cay(\ZZ_p^{n-1},S)$ where $S=\ZZ_p^{n-1}(t_0-1,t_1,\ldots,t_{p-1})\cup
\ZZ_p^{n-1}(t_0,t_1-1,\ldots,t_{p-1})\cup \cdots \cup
\ZZ_p^{n-1}(t_0,t_1,\ldots,t_{p-1}-1)$. Suppose $\sum_{i=0}^{p-1}it_i \equiv
0\pmod p$, then \[\chi_q(G) = \chi_q(H).\]
\end{theorem}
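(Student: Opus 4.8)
The plan is to prove the two inequalities $\chi_q(H)\le\chi_q(G)$ and $\chi_q(G)\le\chi_q(H)$ separately, each time by producing an explicit graph homomorphism and appealing to \autoref{eq:subg}. Two elementary observations organize the argument. Writing $\sigma(\mathbf v)=\sum_k v_k\in\ZZ_p$ for the coordinate sum, the hypothesis $\sum_{i=0}^{p-1}it_i\equiv 0\pmod p$ says exactly that every generator of $G$, being of type $(t_0,\dots,t_{p-1})$, satisfies $\sigma=0$. On the other hand, a vector of type $(t_0,\dots,t_j-1,\dots,t_{p-1})$ (for some $j$ with $t_j\ge 1$; the other indices give empty blocks) has $\sigma\equiv\sum_i it_i-j\equiv-j\pmod p$; since $0\le j\le p-1$, this recovers $j$, and since the relevant tuples $(t_0,\dots,t_j-1,\dots,t_{p-1})$ are pairwise distinct, the generating set $S$ of $H$ decomposes into the corresponding blocks, and the block containing a given generator is determined by its coordinate sum.

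For $\chi_q(H)\le\chi_q(G)$ I would use the \emph{lift} $\phi\colon\ZZ_p^{n-1}\to\ZZ_p^{n}$ defined by $\phi(x_1,\dots,x_{n-1})=(x_1,\dots,x_{n-1},-\sum_{k=1}^{n-1}x_k)$, which is injective with image the zero-sum subgroup $\{\mathbf v:\sigma(\mathbf v)=0\}$. If $\mathbf x\sim\mathbf y$ in $H$, then $\mathbf b\defeq\mathbf x-\mathbf y\in S$ has type $(t_0,\dots,t_j-1,\dots,t_{p-1})$ for a unique $j$, and $\sigma(\mathbf b)\equiv-j\pmod p$ by the observation above; hence $\phi(\mathbf x)-\phi(\mathbf y)=(\mathbf b,-\sigma(\mathbf b))=(\mathbf b,j)$, and appending the value $j$ to $\mathbf b$ restores the count of $j$ from $t_j-1$ to $t_j$, so $\phi(\mathbf x)-\phi(\mathbf y)$ has type $(t_0,\dots,t_{p-1})$ and $\phi(\mathbf x)\sim\phi(\mathbf y)$ in $G$. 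Thus $\phi$ is a homomorphism $H\to G$. (The same computation run backwards shows that $H$ is in fact the subgraph of $G$ induced on the zero-sum subgroup, though only one direction is needed.)

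For $\chi_q(G)\le\chi_q(H)$ I would use the projection $\pi\colon\ZZ_p^{n}\to\ZZ_p^{n-1}$ deleting the last coordinate. If $\mathbf x\sim\mathbf y$ in $G$, then $\mathbf b\defeq\mathbf x-\mathbf y$ has type $(t_0,\dots,t_{p-1})$; letting $c$ be its last coordinate (so the value $c$ occurs in $\mathbf b$ and $t_c\ge 1$), the truncation $\mathbf b'\defeq\pi(\mathbf x)-\pi(\mathbf y)$ has type $(t_0,\dots,t_c-1,\dots,t_{p-1})$, which is one of the blocks constituting $S$. It remains only to rule out $\mathbf b'=\mathbf 0$: were this the case, then since $\mathbf b\ne\mathbf 0$ we would have $\mathbf b=(0,\dots,0,c)$ with $c\ne 0$, so $T(\mathbf b)$ has $t_0=n-1$ and $t_c=1$, whence $\sum_i it_i=c$ with $1\le c\le p-1$, contradicting $\sum_i it_i\equiv 0\pmod p$. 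Hence $\mathbf b'\in S$, so $\pi(\mathbf x)\sim\pi(\mathbf y)$ in $H$ and $\pi$ is a homomorphism $G\to H$. Combining the two inequalities yields $\chi_q(G)=\chi_q(H)$.

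The sole real difficulty is this last non-degeneracy step, and it is exactly (and only) there that the hypothesis $\sum_{i=0}^{p-1}it_i\equiv 0\pmod p$ is used: without it, an edge of $G$ whose difference is supported on the deleted coordinate would collapse to a loop under $\pi$, and $\pi$ would fail to be a homomorphism. The remaining hypothesis $t_i=t_{p-i}$ is used only implicitly, to guarantee that the generating sets of $G$ and of $H$ are inversion-closed so that both are genuine undirected Cayley graphs; everything else is bookkeeping with types.
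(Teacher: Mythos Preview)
Your proposal is correct and follows the same approach as the paper: the parity-lift $\phi(\mathbf{x})=(\mathbf{x},-\sigma(\mathbf{x}))$ gives a homomorphism $H\to G$ and the coordinate projection gives $G\to H$, with you adding the explicit check (which the paper omits) that $\pi$ does not collapse an edge to a loop. One small quibble on your closing commentary: the hypothesis $\sum_i it_i\equiv 0\pmod p$ is not used \emph{only} in that non-degeneracy step---you already invoked it in the lift argument (via your opening observation) to conclude $\sigma(\mathbf{b})\equiv -j$, and without it the appended coordinate would fail to restore the type to $(t_0,\dots,t_{p-1})$.
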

\begin{proof}
We prove by constructing homomorphisms between $G$ and $H$. We claim that $H$
can be embedded into $G$ through the following map
$$
\phi : \ZZ_p^{n-1} \to \ZZ_p^{n}, \quad \mathbf{x} \mapsto
\left(\mathbf{x},-\mathbf{x}\cdot  \mathbf{1}\right),
$$ where $\mathbf{1}$ is the all-one vector in $\ZZ_p^{n-1}$.
In other words, $\phi(\mathbf{x})$ is derived by appending a  check bit to
$\mathbf{x}$. We  prove that $\phi$ is indeed a homomorphism. For any two
vectors $\mathbf{x}$ and $\mathbf{y}$ connected in $H$,
$T(\mathbf{x}-\mathbf{y})=(t_0,t_1,\ldots,t_{j}-1,\ldots,t_{p-1})$ for some
$j\in [0,p-1]$. Then $ (\mathbf{x}-\mathbf{y}) \cdot  \mathbf{1}\equiv
\sum_{i=0}^{p-1}it_i-j \equiv -j \pmod p$. Since
$\phi(\mathbf{x})=(\mathbf{x},-\mathbf{x}\cdot  \mathbf{1})$ and
$\phi(\mathbf{y})=(\mathbf{y},-\mathbf{y}\cdot  \mathbf{1})$,
$\phi(\mathbf{x})-\phi(\mathbf{y})=(\mathbf{x}-\mathbf{y},(\mathbf{y}-\mathbf{x})
\cdot  \mathbf{1})=(\mathbf{x}-\mathbf{y},j)$. So
$T(\phi(\mathbf{x})-\phi(\mathbf{y}))=(t_0,t_1,\ldots,t_{j},\ldots,t_{p-1})$,
which means that $\phi(\mathbf{x})$ and $\phi(\mathbf{y})$ are connected in $G$.
This shows that $\chi_q(H) \le \chi_q(G)$.

Conversely, the projection onto the first $n-1$ coordinates is a homomorphism of
graphs $G \to H$, i.e.,
$$
\ZZ_p^{n} \to \ZZ_p^{n-1}, \quad (x_1,x_2,\dots,x_{n-1},x_{n}) \mapsto
(x_1,x_2,\dots,x_{n-1}).
$$
This shows that $\chi_q(G) \le \chi_q(H)$.
\end{proof}

Based on previous results on quantum chromatic numbers in \autoref{tab:results},
we have the following corollary. Recall that $L_r$ is the set of vectors of
weight $r$.

\begin{corollary}
	\label{corollary:easy}
\begin{itemize}
\item[(1)] For any $t\geq 1$, $\chi_q(\mathcal{G}_1) = \chi_q(\mathcal{G}_2)
=4t$, where $$\mathcal{G}_1\triangleq \cay(\ZZ_2^{4t-1},L_{2t-1} \cup L_{2t})
\text{ and } \mathcal{G}_2\triangleq \cay(\ZZ_2^{4t-2},L_{2t-1} \cup L_{2t}).$$
\item[(2)] When $l \ge 1$, $\chi_q(\mathcal{G}_3) = 3l$, where
\[\mathcal{G}_3 \defeq \cay(\ZZ_3^{3l-1},\ZZ_3^{3l-1}(l-1,l,l) \cup
\ZZ_3^{3l-1}(l,l-1,l) \cup \ZZ_3^{3l-1}(l,l,l-1)).\]
\item[(3)] When $p\geq 4$ is an integer, $l$ is large enough and $l(p-1)$ is
even, $\chi_q(\mathcal{G}_4) = lp$, where
\[\mathcal{G}_4 \defeq \cay(\ZZ_p^{lp-1},\ZZ_p^{lp-1}(l-1,l,\ldots,l) \cup
\ZZ_p^{lp-1}(l,l-1,\ldots,l) \cup \dots \cup\ZZ_p^{lp-1}(l,l,\ldots,l-1)).\]
\end{itemize}
\end{corollary}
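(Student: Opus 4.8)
The plan is to deduce all four equalities directly from \autoref{theorem:subgraph_gener}, by realizing each graph in the statement as the graph $H$ produced by that theorem from a suitable triple $(p,n,T)$, where $T=(t_0,\dots,t_{p-1})$ is a $p$-partition of $n$ and the companion graph $G=\cay(\ZZ_p^n,\ZZ_p^n(t_0,\dots,t_{p-1}))$ has a quantum chromatic number already recorded in \autoref{tab:results} or in \autoref{cor:3l}. For each part I would: (i) write down the data $(p,n,T)$; (ii) check the two hypotheses of \autoref{theorem:subgraph_gener}, namely the palindrome condition $t_i=t_{p-i}$ for $i\in[\floor{p/2}]$ and the congruence $\sum_{i=0}^{p-1}it_i\equiv0\pmod{p}$; (iii) unwind the set $S$ defined in \autoref{theorem:subgraph_gener} and check that it is exactly the stated generating set of the graph; and then invoke $\chi_q(H)=\chi_q(G)$ together with the known value of $\chi_q(G)$.

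For part (1) I take $p=2$. For $\mathcal{G}_1$, use $n=4t$ and $T=(2t,2t)$, so that $G=O_{4t,2}$; the palindrome condition is automatic when $p=2$ and $\sum_{i} i t_i = 2t$ is even. Rewriting $\ZZ_2^m$-types as weight layers via $\ZZ_2^m(m-r,r)=L_r$, the set $S$ becomes $L_{2t-1}\cup L_{2t}$, so $H=\mathcal{G}_1$ and $\chi_q(\mathcal{G}_1)=\chi_q(O_{4t,2})=4t$ by \cite{ExactHadamardGraphs,Feng}. For $\mathcal{G}_2$, use $n=4t-1$ and $T=(2t-1,2t)$; again $\sum_{i} i t_i = 2t$ is even, and $S$ unwinds to $L_{2t-1}\cup L_{2t}$ inside $\ZZ_2^{4t-2}$, so $H=\mathcal{G}_2$ and $\chi_q(\mathcal{G}_2)=\chi_q(\cay(\ZZ_2^{4t-1},\ZZ_2^{4t-1}(2t-1,2t)))=4t$ by \cite{Feng}. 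Note that I realize $\mathcal{G}_2$ from this unbalanced partition rather than by iterating the construction on $\mathcal{G}_1$, since $\mathcal{G}_1$ is not of the single-type form that \autoref{theorem:subgraph_gener} requires of $G$.

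For part (2) I take $p=3$, $n=3l$, $T=(l,l,l)$: then $G=O_{3l,3}$ and $H=\mathcal{G}_3$ verbatim, the palindrome condition reads $l=l$, and $\sum_{i} i t_i = l+2l=3l\equiv0\pmod{3}$; hence $\chi_q(\mathcal{G}_3)=\chi_q(O_{3l,3})=3l$ for every $l\ge1$ by \autoref{cor:3l}. For part (3) I take the integer $p\ge4$, $n=lp$, $T=(l,\dots,l)$: then $G=\cay(\ZZ_p^{lp},\ZZ_p^{lp}(l,\dots,l))$, $H=\mathcal{G}_4$, the palindrome condition is trivial, and $\sum_{i=0}^{p-1}il=l\binom{p}{2}=\frac{lp(p-1)}{2}$, which is $\equiv0\pmod{p}$ exactly when $l(p-1)$ is even (automatic when $p$ is odd; equivalent to $l$ even when $p$ is even) --- precisely the standing hypothesis. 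Together with ``$l$ large'' this gives $\chi_q(G)=lp$ from the corresponding entry of \autoref{tab:results} \cite{SDU}, hence $\chi_q(\mathcal{G}_4)=lp$.

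With \autoref{theorem:subgraph_gener} in hand, the proof is a short finite case check and I do not expect a serious obstacle. The two steps that need genuine care are the congruence $\frac{lp(p-1)}{2}\equiv0\pmod{p}$ in part (3) --- where one splits on the parity of $p$ and the hypothesis on $l(p-1)$ is exactly what makes it hold --- and, in the $p=2$ cases, the bookkeeping translating the ``type'' description of the set $S$ output by \autoref{theorem:subgraph_gener} into the ``Hamming-weight layer'' description $L_{2t-1}\cup L_{2t}$ used to state $\mathcal{G}_1$ and $\mathcal{G}_2$.
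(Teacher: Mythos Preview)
Your proposal is correct and follows exactly the intended route: the paper presents \autoref{corollary:easy} immediately after \autoref{theorem:subgraph_gener} with the remark that it follows from that theorem together with the known values in \autoref{tab:results} (and \autoref{cor:3l}), and your case-by-case instantiation of $(p,n,T)$ reproduces this. Your only addition is writing out the hypothesis checks explicitly, including the observation that for $\mathcal{G}_2$ one should start from the unbalanced partition $(2t-1,2t)$ and invoke the known value $\chi_q(\cay(\ZZ_2^{4t-1},(2t-1,2t)))=4t$ rather than trying to iterate on $\mathcal{G}_1$; this is indeed the correct reading of the paper's argument.
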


\subsection{Subgraphs of $O_{n,3}$} Consider the Cayley graph
\[\mathcal{G}_5\triangleq \cay(\ZZ_3^{3l-1},\ZZ_3^{3l-1}(l-1,l,l)), \] that is
the Cayley graph over $\ZZ_3^{3l-1}$ generated by vectors of type $(l-1,l,l)$.
Then $\mathcal{G}_5$ can be embedded into $O_{3l,3} \cong
\cay(\ZZ_3^{3l},\ZZ_3^{3l}(l,l,l))$ by the following homomorphism
\begin{equation}
	\label{eqn:homomorphism_3}
\ZZ_3^{3l-1} \to \ZZ_3^{3l}, \quad \mathbf{x} \mapsto (\mathbf{x},0).
\end{equation}
This shows that $\chi_q(\mathcal{G}_5) \le\chi_q(O_{3l,3}) =3l.$
Next, we determine the smallest eigenvalue of $\mathcal{G}_5$.
\begin{theorem}
	\label{theorem:subgraph_eigenvalue_3}
When $l\geq 2$, the smallest eigenvalue of $\mathcal{G}_5=
\cay(\ZZ_3^{3l-1},\ZZ_3^{3l-1}(l-1,l,l))$ is given by
$$
\lambda_{\min}(\mathcal{G}_5) = \lambda(0,1,3l-2)
= -\frac{3l-2}{l^2}\binomial{3l-3}{l-1,l-1,l-1}.
$$
\end{theorem}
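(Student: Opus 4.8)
The plan is to express every eigenvalue of $\mathcal{G}_5$ as an average of three eigenvalues of $O_{3l,3}$, and then read off the minimum from \autoref{theorem:goal}. Throughout set $n=3l$. As in \autoref{subsec:coding_approach}, write $\lambda(\mathbf{u})=\sum_{\mathbf{b}\in B_3}\zeta_3^{\mathbf{u}\cdot\mathbf{b}}$ for the eigenvalue of $O_{n,3}=\cay(\ZZ_3^{n},B_3)$ at $\mathbf{u}\in\ZZ_3^{n}$, and write $\theta(\mathbf{v})=\sum_{\mathbf{b}\in\ZZ_3^{n-1}(l-1,l,l)}\zeta_3^{\mathbf{v}\cdot\mathbf{b}}$ for the eigenvalue of $\mathcal{G}_5$ at $\mathbf{v}\in\ZZ_3^{n-1}$ given by \autoref{eqn:eigenvalue_Cayley_graph} (so that $\lambda_{\min}(\mathcal{G}_5)=\min_{\mathbf v}\theta(\mathbf v)$ and $\lambda(0,1,n-2)$ in the statement means $\theta$ at type $(0,1,n-2)$). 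The first step is the averaging identity $\theta(\mathbf{v})=\tfrac13\bigl(\lambda((\mathbf{v},0))+\lambda((\mathbf{v},1))+\lambda((\mathbf{v},2))\bigr)$: splitting each $\mathbf{b}'\in B_3$ as $(\mathbf{b},d)$ with $d\in\ZZ_3$ and using $(\mathbf{v},c)\cdot(\mathbf{b},d)=\mathbf{v}\cdot\mathbf{b}+cd$ gives $\sum_{c\in\ZZ_3}\lambda((\mathbf{v},c))=\sum_{(\mathbf{b},d)\in B_3}\zeta_3^{\mathbf{v}\cdot\mathbf{b}}\sum_{c\in\ZZ_3}\zeta_3^{cd}$; the inner sum is $3$ when $d=0$ and $0$ otherwise, and $(\mathbf{b},0)\in B_3$ exactly when $T(\mathbf{b})=(l-1,l,l)$, which proves the identity.

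The second step uses the vanishing criterion recorded in \autoref{subsec:coding_approach} (equivalently, a consequence of \autoref{eqn:lambda_v}): $\lambda(\mathbf{u})=0$ unless the three parts of $T(\mathbf{u})$ are mutually congruent modulo $3$. If $T(\mathbf{v})=(s_0,s_1,s_2)$, then $(\mathbf{v},0),(\mathbf{v},1),(\mathbf{v},2)$ have types $(s_0+1,s_1,s_2),(s_0,s_1+1,s_2),(s_0,s_1,s_2+1)$, and a short residue check shows that at most one of these three types can have its three parts mutually congruent mod $3$ (any two such conditions force $0\equiv 1\pmod 3$). Hence $\lambda((\mathbf{v},0))+\lambda((\mathbf{v},1))+\lambda((\mathbf{v},2))$ equals a single eigenvalue of $O_{n,3}$, or is $0$. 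By \autoref{theorem:goal}, every eigenvalue of $O_{n,3}$ is at least $\lambda(1,1,n-2)=-\binom{n}{l,l,l}/(n-1)<0$, so $\theta(\mathbf{v})\ge\tfrac13\lambda(1,1,n-2)$ for all $\mathbf{v}\in\ZZ_3^{n-1}$ (with $\theta(\mathbf{v})=0\ge\tfrac13\lambda(1,1,n-2)$ in the degenerate case).

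The third step checks this lower bound is attained by a vector $\mathbf{v}$ of type $(0,1,n-2)$: its three shifted types are $(1,1,n-2),(0,2,n-2),(0,1,n-1)$, of which the last two do not have mutually congruent parts mod $3$ (so contribute $0$), while the first is the minimizing type of $O_{n,3}$. Thus $\theta(\mathbf{v})=\tfrac13\lambda(1,1,n-2)$, so $\lambda_{\min}(\mathcal{G}_5)=\tfrac13\lambda(1,1,n-2)=-\tfrac{1}{3(n-1)}\binom{n}{l,l,l}$. It remains only to rewrite this using $\binom{3l}{l,l,l}=\tfrac{3l(3l-1)(3l-2)}{l^{3}}\binom{3l-3}{l-1,l-1,l-1}$, which yields $-\tfrac{3l-2}{l^{2}}\binom{3l-3}{l-1,l-1,l-1}$, as claimed.

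The only genuinely new content is the averaging identity together with the observation that at most one of the three shifts survives the mod‑$3$ test — this is what prevents any cancellation from pushing $\theta(\mathbf{v})$ below $\tfrac13\lambda(1,1,n-2)$; once these are in hand, the lower bound is immediate from \autoref{theorem:goal} and matching it is pure arithmetic. The only caveat to keep an eye on is that the hypothesis $l\ge 2$ is needed so that $(0,1,n-2)$ (and the other types appearing above) are legitimate types of $\ZZ_3^{3l-1}$; I would also note in passing that the $s_1\leftrightarrow s_2$ symmetry of $\mathcal{G}_5$ (coming from $\mathbf b\mapsto-\mathbf b$ on the generating set) is harmless.
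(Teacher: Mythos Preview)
Your proof is correct and takes a genuinely different route from the paper's. The paper first derives, via the MacWilliams machinery of \autoref{subsec:coding_approach}, the closed form
\[
\lambda(\mathbf{v})=\frac{\binom{3l-1}{l-1,l,l}}{\binom{3l-1}{t_0,t_1,t_2}}\bigl((x^3+y^3+z^3-3xyz)^{l-1}(x^2+y^2+z^2-xy-yz-xz)\bigr)[t_0,t_1,t_2],
\]
then splits according to which degree-$2$ monomial the quadratic factor contributes, rewrites each case as a scalar multiple of an eigenvalue of $O_{3l-3,3}$, and rules out anything smaller than $\lambda(3l-2,1,0)$ by a ratio argument against $\lambda_{\max}(O_{3l-3,3})/\abs{\lambda_{\min}(O_{3l-3,3})}=3l-4$. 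Your argument bypasses all of this: the averaging identity $\theta(\mathbf{v})=\tfrac13\sum_{c}\lambda((\mathbf{v},c))$ together with the mod-$3$ vanishing shows each $\theta(\mathbf v)$ is exactly one third of a single eigenvalue of $O_{3l,3}$, so \autoref{theorem:goal} for $O_{3l,3}$ (rather than $O_{3l-3,3}$) gives the lower bound in one line, and attainment is immediate. Your approach is shorter, avoids the generating-function derivation and the case split, and is more transparently structural; the paper's approach, on the other hand, produces the explicit eigenvalue formula for $\mathcal{G}_5$ as a byproduct, which could be useful if one later wanted finer spectral information. A minor remark: your residue check in fact shows that \emph{exactly} one of the three shifts survives (since $s_0+s_1+s_2\equiv 2\pmod 3$ forces one of the three congruence patterns), so the ``or is $0$'' clause is never needed---but this does not affect the argument.
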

\begin{proof}
Suppose $\mathbf{v} \in \ZZ_3^{3l-1}$ is of type $T(\mathbf{v}) =
(t_0,t_1,t_2)$, where $l \ge 2$. Following the approach described in
\autoref{subsec:coding_approach},
the eigenvalue $\lambda(\mathbf{v})$ of $\mathcal{G}_5$ is given by
$$
\lambda(\mathbf{v}) = \frac{\binomial{3l-1}{l-1,l,l}}{\binomial{3l-1}{t_0,t_1,t_2}}
\qty((x^3+y^3+z^3-3xyz)^{l-1}(x^2+y^2+z^2-xy-yz-xz))[t_0,t_1,t_2].
$$
In the following, we illustrate how to determine the smallest eigenvalue. To
distinguish between the eigenvalues of $\mathcal{G}_5$ and the eigenvalues of
$O_{3l,3}$, we denote by $\lambda(t_0,t_1,t_2) \defeq \lambda(\mathbf{v})$ the
eigenvalues of $\mathcal{G}_5$ as in the statement of the theorem, and
$\lambda_{O_{3l-3,3}}(t_0,t_1,t_2)$ the eigenvalues of $O_{3l-3,3}$.

For simplicity, denote $W(x,y,z) \defeq (x^3+y^3+z^3-3xyz)^{l-1}$, $Y(x,y,z)
\defeq x^2+y^2+z^2-xy-yz-xz$ and write  $T = T(\mathbf{v}) = (t_0,t_1,t_2)$.
Assume that $(WY)[T] \neq 0$. Then by the formulas of $W$ and $Y$, there is a
unique pair of nonnegative ordered  $3$-partitions $(T_0,T_1)$, where $T_0$ is a
partition of $3l-3$ and  $T_1$ is a partition of $2$, such that $T = T_0 + T_1$
(here is the vector addition) and $(WY)[T] = W[T_0] \cdot Y[T_1]$. Further, the
components of $T_0$ are congruent modulo $3$ and
$$
T_1 \in \setnd{(2,0,0),(0,2,0),(0,0,2),(1,1,0),(1,0,1),(0,1,1)}.
$$

As $\lambda(t_0,t_1,t_2)$ is independent of the order of $t_0,t_1,t_2$, we only
need to consider the following two cases that
$$
T_1 \in \setnd{(1,1,0), (2,0,0)}.
$$
If $T_1 = (1,1,0)$, we have
\begin{align}
\lambda(t_0,t_1,t_2) &= -\frac{\binomial{3l-1}{l-1,l,l}}{\binomial{3l-1}{t_0,t_1,t_2}}
\frac{\binom{3l-3}{t_0-1,t_1-1,t_2}}{\binomial{3l-3}{l-1,l-1,l-1}}
\lambda_{O_{3l-3,3}}(t_0-1,t_1-1,t_2) \label{eq:express1} \\ 
&= -\frac{t_0t_1}{l^2}\lambda_{O_{3l-3,3}}(t_0-1,t_1-1,t_2). \label{eq:express2}
\end{align}
We show that $\lambda(3l-2,1,0)$ is the smallest eigenvalue in this case.
The proof is by contradiction. Assume that
$\lambda(t_0,t_1,t_2) < \lambda(3l-2,1,0)$ for some $(t_0,t_1,t_2)$ in this case.
Then by \autoref{eq:express1} and \autoref{eq:express2},
\begin{equation}
-\frac{t_0t_1}{l^2}\lambda_{O_{3l-3,3}}(t_0-1,t_1-1,t_2) <
-\frac{3l-2}{l^2}\lambda_{O_{3l-3,3}}(3l-3,0,0).
\end{equation}
This implies that
\begin{equation}\label{eq:35}
\frac{t_0t_1}{3l-2} > \frac{\lambda_{O_{3l-3,3}}(3l-3,0,0)}{\lambda_{O_{3l-3,3}}(t_0-1,t_1-1,t_2)}
\ge \frac{\lambda_{O_{3l-3,3}}(3l-3,0,0)}{\abs{\lambda_{\min}(O_{3l-3,3})}} =  3l-4,
\end{equation}
where the first inequality follows from the fact that $\lambda(t_0,t_1,t_2)$ is negative
and thus $\lambda_{O_{3l-3,3}}(t_0-1,t_1-1,t_2)$ is positive, and
the second inequality and the last equality follow from \autoref{theorem:goal} and
\autoref{eqn:lambda_smallest}.
So we have $t_0t_1 > (3l-2)(3l-4)$, which contradicts to the assumption that
$t_0+t_1+t_2 = 3l-1$ and $l \ge 2$.

If $T_1 =(2,0,0)$, we have
\begin{align}
\lambda(t_0,t_1,t_2) &= \frac{\binomial{3l-1}{l-1,l,l}}{\binomial{3l-1}{t_0,t_1,t_2}}
\frac{\binom{3l-3}{t_0-2,t_1,t_2}}{\binomial{3l-3}{l-1,l-1,l-1}}
\lambda_{O_{3l-3,3}}(t_0-2,t_1,t_2) \\
&= \frac{t_0(t_0-1)}{l^2}\lambda_{O_{3l-3,3}}(t_0-2,t_1,t_2).
\end{align}
Again, we show that $\lambda(t_0,t_1,t_2)$ cannot be  smaller than
$\lambda(3l-2,1,0)$. Otherwise, if $\lambda(t_0,t_1,t_2) < \lambda(3l-2,1,0)$, we have
\begin{equation}
\frac{t_0(t_0-1)}{l^2}\lambda_{O_{3l-3,3}}(t_0-2,t_1,t_2) < -\frac{3l-2}{l^2}\lambda_{O_{3l-3,3}}(3l-3,0,0).
\end{equation}
Similar to \autoref{eq:35}, we have 
\begin{equation}
\frac{t_0(t_0-1)}{3l-2} > \frac{\lambda_{O_{3l-3,3}}(3l-3,0,0)}{\abs{\lambda_{O_{3l-3,3}}(t_0-2,t_1,t_2)}}
\ge \frac{\lambda_{O_{3l-3,3}}(3l-3,0,0)}{\abs{\lambda_{\min}(O_{3l-3,3})}} = 3l-4,
\end{equation}
which yields the contradiction  $t_0(t_0-1) > (3l-2)(3l-4)$.
\end{proof}
\begin{theorem}\label{g5}
When $l \ge 2$, $\chi_q(\mathcal{G}_5) = 3l$, where $\mathcal{G}_5=\cay(\ZZ_3^{3l-1},\ZZ_3^{3l-1}(l-1,l,l))$.
\end{theorem}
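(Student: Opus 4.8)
The plan is to match the easy upper bound $\chi_q(\mathcal{G}_5)\le 3l$ with a matching spectral lower bound. The upper bound is already in hand: the map $\mathbf{x}\mapsto(\mathbf{x},0)$ of \autoref{eqn:homomorphism_3} is a graph homomorphism from $\mathcal{G}_5$ into $O_{3l,3}$, so \autoref{eq:subg} together with \autoref{cor:3l} gives $\chi_q(\mathcal{G}_5)\le\chi_q(O_{3l,3})=3l$. It therefore remains to show $\chi_q(\mathcal{G}_5)\ge 3l$, and for this I would invoke the spectral bound \autoref{eqn:spectral_lb}, namely $\chi_q(\mathcal{G}_5)\ge 1-\lambda_{\max}(\mathcal{G}_5)/\lambda_{\min}(\mathcal{G}_5)$.

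The two eigenvalues needed are already available. Being a Cayley graph, $\mathcal{G}_5$ is regular of degree $\card{\ZZ_3^{3l-1}(l-1,l,l)}=\binom{3l-1}{l-1,l,l}$, so this is its largest eigenvalue; equivalently, substituting the type $(3l-1,0,0)$ of $\mathbf{0}$ into the eigenvalue formula used in the proof of \autoref{theorem:subgraph_eigenvalue_3} returns $\binom{3l-1}{l-1,l,l}$. For the smallest eigenvalue I would simply quote \autoref{theorem:subgraph_eigenvalue_3}, which gives $\lambda_{\min}(\mathcal{G}_5)=-\frac{3l-2}{l^2}\binom{3l-3}{l-1,l-1,l-1}<0$.

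What is left is a short cancellation. From the factorial identity $\binom{3l-1}{l-1,l,l}=\frac{(3l-1)(3l-2)}{l^2}\binom{3l-3}{l-1,l-1,l-1}$ one obtains
\[
\frac{\lambda_{\max}(\mathcal{G}_5)}{\abs{\lambda_{\min}(\mathcal{G}_5)}}
=\frac{l^2}{3l-2}\cdot\frac{(3l-1)(3l-2)}{l^2}=3l-1,
\]
so that $1-\lambda_{\max}(\mathcal{G}_5)/\lambda_{\min}(\mathcal{G}_5)=1+(3l-1)=3l$, whence $\chi_q(\mathcal{G}_5)\ge 3l$. Combined with the upper bound, $\chi_q(\mathcal{G}_5)=3l$ for $l\ge 2$.

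I do not expect any genuine obstacle in this argument: all the real work already sits in \autoref{theorem:subgraph_eigenvalue_3} (and, through it, in \autoref{theorem:goal}), which pins down $\lambda_{\min}(\mathcal{G}_5)$ exactly. The only point that must be checked — and which the computation above confirms — is that the ratio $\lambda_{\max}/\abs{\lambda_{\min}}$ equals exactly $3l-1$ rather than merely approaching it, so that the spectral bound is tight and delivers precisely $3l$ with no rounding slack.
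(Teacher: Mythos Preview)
Your proof is correct and follows exactly the paper's approach: the upper bound via the homomorphism \autoref{eqn:homomorphism_3} into $O_{3l,3}$ and \autoref{cor:3l}, and the lower bound via the spectral inequality \autoref{eqn:spectral_lb} combined with \autoref{theorem:subgraph_eigenvalue_3}. The only difference is cosmetic---you spell out the ratio computation explicitly, whereas the paper simply says the result ``follows immediately.''
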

\begin{proof}
The result follows immediately from the spectral lower bound in \autoref{eqn:spectral_lb}
and \autoref{theorem:subgraph_eigenvalue_3}.
\end{proof}

Applying \autoref{theorem:subgraph_gener} to \autoref{g5}, we have the following corollary.

\begin{corollary}
	\label{theorem:subgraph_3_III}
When $l \ge 2$, $\chi_q(\mathcal{G}_6) = 3l$, where
\[\mathcal{G}_6 \defeq
\cay(\ZZ_3^{3l-2},\ZZ_3^{3l-2}(l-2,l,l) \cup \ZZ_3^{3l-2}(l-1,l-1,l) \cup \ZZ_3^{3l-2}(l-1,l,l-1)).\]
\end{corollary}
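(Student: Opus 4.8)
The plan is to obtain this corollary as an immediate consequence of \autoref{theorem:subgraph_gener} and \autoref{g5}, exactly as announced just before the statement. Concretely, I would invoke \autoref{theorem:subgraph_gener} with $p = 3$, ambient dimension $n = 3l-1$, and the ordered non-negative $3$-partition $(t_0,t_1,t_2) = (l-1,l,l)$ of $n$.

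The first step is to verify the two hypotheses of \autoref{theorem:subgraph_gener}. The symmetry condition $t_i = t_{p-i}$ for $i \in [\floor{p/2}]$ collapses, when $p=3$, to the single requirement $t_1 = t_2$, which holds since $t_1 = t_2 = l$. The divisibility condition $\sum_{i=0}^{p-1} i t_i \equiv 0 \pmod p$ reads $0\cdot(l-1) + 1\cdot l + 2\cdot l = 3l \equiv 0 \pmod 3$, which is clearly satisfied. Because $l \ge 2$ we also have $t_0 - 1 = l-2 \ge 0$, so the generating set of the graph $H$ appearing in \autoref{theorem:subgraph_gener} is genuinely a union of sets of the prescribed types.

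With these data, the graph $G$ of \autoref{theorem:subgraph_gener} is $\cay(\ZZ_3^{3l-1},\ZZ_3^{3l-1}(l-1,l,l)) = \mathcal{G}_5$, while the graph $H$ is
$$
\cay\bigl(\ZZ_3^{3l-2}, \ZZ_3^{3l-2}(l-2,l,l)\cup \ZZ_3^{3l-2}(l-1,l-1,l)\cup \ZZ_3^{3l-2}(l-1,l,l-1)\bigr) = \mathcal{G}_6,
$$
since the three shifted partitions $(t_0-1,t_1,t_2)$, $(t_0,t_1-1,t_2)$, $(t_0,t_1,t_2-1)$ are precisely $(l-2,l,l)$, $(l-1,l-1,l)$, $(l-1,l,l-1)$. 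Hence \autoref{theorem:subgraph_gener} yields $\chi_q(\mathcal{G}_6) = \chi_q(\mathcal{G}_5)$, and \autoref{g5} (which requires $l \ge 2$) gives $\chi_q(\mathcal{G}_5) = 3l$; combining the two identities finishes the proof.

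Since everything reduces to matching notation and applying two prior results, I do not expect a genuine obstacle here. The only points that need care are checking the two hypotheses above and observing that the standing assumption $l \ge 2$ is inherited from \autoref{g5} (and simultaneously guarantees $l-2 \ge 0$, so that all three types in the generating set of $\mathcal{G}_6$ are legitimate).
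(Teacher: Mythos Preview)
Your proposal is correct and matches the paper's intended argument exactly: the corollary is obtained by applying \autoref{theorem:subgraph_gener} with $p=3$, $n=3l-1$, $(t_0,t_1,t_2)=(l-1,l,l)$ to identify $\chi_q(\mathcal{G}_6)=\chi_q(\mathcal{G}_5)$, and then invoking \autoref{g5}. The hypothesis checks you spell out (the symmetry $t_1=t_2$, the congruence $3l\equiv 0\pmod 3$, and $l-2\ge 0$) are precisely what is needed.
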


\section{Quantum Chromatic Number of $\cay(\ZZ_2^n,L_2)$}
\label{sec:Hn2}
Denote $H(n,r) \defeq \cay(\ZZ_2^n,L_r)$. When $r$ is odd, $H(n,r)$ is bipartite
thus $\chi_q(H(n,r))=\chi(H(n,r))=2$. For even $r \ge n/2$, it was shown in
\cite{Feng} that
$$
r \le \chi_q(H(n,r)) \le 2r.
$$
The open problem to find a good upper bound on $\chi_q(H(n,r))$ for even $r \le
n/2$ was raised in \cite{Feng}. In this section, as a step toward this problem,
we focus on the case $r = 2$ and determine $\chi_q(H(n,2))$ for infinitely many
$n$ by the theory of combinatorial designs.

We first compute the spectral lower bound of $\chi_q(H(n,2))$. As in
\autoref{subsecp2}, all different eigenvalues of $H(n,2)$ are
$$
\lambda(w) = K_2(w) = (n-2w)^2/2 - n/2, ~w\in [n].
$$
Thus,
$$
\lambda_{\min}(H(n,2)) =
\begin{cases}
-n/2, & n \equiv 0 \pmod{2}, \\
-(n-1)/2, & n \equiv 1 \pmod{2}.
\end{cases}
$$
Clearly, $\lambda_{\max}(H(n,2)) = \binom{n}{2}$. So the spectral lower bound
shows that
\begin{equation}
	\label{eqn:spectral_lb_Hn2}
\chi_q(H(n,2)) \ge 1-\frac{\lambda_{\max}(H(n,2))}{\lambda_{\min}(H(n,2))} =
\begin{cases}
n, & n \equiv 0 \pmod{2}, \\
n+1, & n \equiv 1 \pmod{2}.
\end{cases}
\end{equation}

To find an upper bound on $\chi_q(H(n,2))$, we use the theory of combinatorial designs
to construct flat orthogonal representations of $H(n,2)$. To this end, we introduce the
following combinatorial structure. Let $2^{[n]}$ denote the set of all subsets of $[n]$.
\begin{definition}[balanced pair-separating family]
	\label{def:design}
A family $\mathcal{F}\subset 2^{[n]}$ is called a \emph{balanced pair-separating
family with separation number $\theta$} if for every pair of distinct elements
$i,j \in [n]$, there are exactly $\theta$ members (called \emph{blocks}) of
$\mathcal{F}$ containing only one of $i,j$.
\end{definition}

The following theorem shows that a balanced pair-separating family
with a big separation number can help to construct flat orthogonal
representations of $H(n, 2)$, and hence provide an upper bound of
$\chi_q(H(n,2))$. Note that there is a natural one to one correspondence between
vectors in $\ZZ_2^n$ and subsets of $[n]$ by defining $\supp (\mathbf{v}) \defeq
\set{i \in [n]}{v_i \neq 0}$ for each  $\mathbf{v}\in \ZZ_2^n$. Through this
correspondence, we will not distinguish vectors in $\ZZ_2^n$ and subsets of
$[n]$.
\begin{theorem}
	\label{theorem:design}
Suppose there is a balanced pair-separating  family $\mathcal{F} \subset
2^{[n]}$ with separation number $\theta \ge |\mathcal{F}|/2$. Then
$\chi_q(H(n,2))\leq 2\theta$.

\end{theorem}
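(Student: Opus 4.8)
The plan is to build a flat orthogonal representation $\phi\colon \ZZ_2^n \to \CC^{\mathcal F}$ of dimension $d = |\mathcal F|$ out of the given balanced pair-separating family $\mathcal F$, and then convert it into a quantum coloring. Index the coordinates of $\CC^{\mathcal F}$ by the blocks $F \in \mathcal F$. For a vector $\mathbf v \in \ZZ_2^n$ (equivalently a subset $\supp(\mathbf v) \subseteq [n]$), I would set
\begin{equation}
\phi(\mathbf v)_F \;\defeq\; (-1)^{|\supp(\mathbf v)\cap F|}, \qquad F \in \mathcal F.
\end{equation}
This is manifestly flat, since every component has modulus $1$; the representation lives in $\CC^{|\mathcal F|}$ and each vector has squared norm $|\mathcal F|$.

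The key computation is the inner product. For $\mathbf v, \mathbf w \in \ZZ_2^n$ adjacent in $H(n,2)$, write $A \defeq \supp(\mathbf v)$, $B \defeq \supp(\mathbf w)$, so $A \triangle B = \{i,j\}$ for some distinct $i,j \in [n]$. Then
\begin{equation}
\inp{\phi(\mathbf v)}{\phi(\mathbf w)} = \sum_{F \in \mathcal F} (-1)^{|A\cap F| + |B\cap F|} = \sum_{F \in \mathcal F} (-1)^{|(A\triangle B)\cap F|} = \sum_{F\in\mathcal F}(-1)^{|\{i,j\}\cap F|}.
\end{equation}
A block $F$ contributes $-1$ exactly when $F$ contains precisely one of $i,j$, and $+1$ otherwise. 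By the balanced pair-separating property, there are exactly $\theta$ blocks of the former kind and $|\mathcal F|-\theta$ of the latter, so $\inp{\phi(\mathbf v)}{\phi(\mathbf w)} = (|\mathcal F|-\theta) - \theta = |\mathcal F| - 2\theta$. Under the hypothesis $\theta \ge |\mathcal F|/2$ this is $\le 0$; if $\theta = |\mathcal F|/2$ it is already $0$ and $\phi$ is an orthogonal representation of dimension $|\mathcal F| = 2\theta$, so $\chi_q(H(n,2)) \le 2\theta$ follows directly from the flat-representation principle cited after the definition of orthogonal representation.

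It remains to handle the case $\theta > |\mathcal F|/2$, where $\inp{\phi(\mathbf v)}{\phi(\mathbf w)} = |\mathcal F|-2\theta < 0$ rather than $0$. The fix is to pad each vector with a constant block of extra coordinates to force orthogonality: append $2\theta - |\mathcal F|$ new coordinates, all equal to $1$, to every $\phi(\mathbf v)$, obtaining $\tilde\phi\colon \ZZ_2^n \to \CC^{2\theta}$ with $\tilde\phi(\mathbf v) = (\phi(\mathbf v), 1, \dots, 1)$. This keeps flatness, raises the dimension to exactly $2\theta$, and adds $2\theta - |\mathcal F|$ to every pairwise inner product along edges, so $\inp{\tilde\phi(\mathbf v)}{\tilde\phi(\mathbf w)} = (|\mathcal F| - 2\theta) + (2\theta - |\mathcal F|) = 0$ for adjacent $\mathbf v, \mathbf w$. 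Thus $\tilde\phi$ is a flat orthogonal representation of $H(n,2)$ of dimension $2\theta$, and $\chi_q(H(n,2)) \le 2\theta$ by the construction of a quantum $d$-coloring from a flat orthogonal representation. The only mild subtlety — the step I would be most careful about — is checking that the padding construction genuinely preserves flatness and that the edge set of $H(n,2)$ is exactly the set of pairs $\mathbf v,\mathbf w$ with $|\supp(\mathbf v)\triangle\supp(\mathbf w)| = 2$, so that the pair-separating hypothesis applies to every edge; both are immediate from the definitions, so there is no real obstacle here.
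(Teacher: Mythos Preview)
Your proof is correct and essentially identical to the paper's. The paper defines $\phi(\mathbf v) = (\chi_{B_1}(\mathbf v),\dots,\chi_{B_b}(\mathbf v),\mathbf 1_{2\theta-b})$ using character notation, which is exactly your $(-1)^{|\supp(\mathbf v)\cap F|}$ together with the padding by ones; the inner-product computation and the appeal to the flat-orthogonal-representation bound are the same.
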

\begin{proof}Let $\mathcal{F}=\{B_1,B_2,\dots,B_b\}$. Then $2\theta\geq b$.
Consider the following map
$$
\phi : \ZZ_2^n \to \setnd{\pm 1}^{2\theta}, \quad \mathbf{v} \mapsto
(\chi_1(\mathbf{v}),\chi_2(\mathbf{v}),\dots,\chi_b(\mathbf{v}),\mathbf{1}_{2\theta-b}),
$$
where $\chi_i \defeq \chi_{B_i}$ is the character of $\ZZ_2^n$ corresponding to
$B_i$, and $\mathbf{1}_{2\theta-b}$ is the all-one vector of length $2\theta-b$.
It suffices to prove that $\phi$ is a flat orthogonal representation of $H(n,2)$
of dimension $2\theta$.

Given adjacent $\mathbf{v}$ and $\mathbf{w}$ in $H(n,2)$, let $\mathbf{z} =
\mathbf{v} + \mathbf{w}$. Then $\wt(\mathbf{z})=2$. We need to show that
$\phi(\mathbf{v})$ and $\phi(\mathbf{w})$ are orthogonal. Consider the inner
product
$$
\inp{\phi(\mathbf{v})}{\phi(\mathbf{w})} = \sum_{i=1}^b \chi_i(\mathbf{v})\chi_i(\mathbf{w})
+ \qty(2\theta - b) = \sum_{i=1}^b \chi_i(\mathbf{z}) + \qty(2\theta - b).
$$
Note that
$\chi_i(\mathbf{z}) = -1$ if and only if $\card{\supp (\mathbf{z}) \cap B_i} = 1$. Therefore,
$$
\inp{\mathbf{v}}{\mathbf{w}} = b-2\theta + 2\theta - b  = 0.
$$
As $\phi(\mathbf{v})$ consists of only $\pm 1$, $\phi$ is flat.
\end{proof}

Next, we show the existence of  balanced pair-separating families from balanced
incomplete block designs (BIBD). An $(n,k,\lambda)$-BIBD is a family
$\mathcal{B}$ of $k$-subsets of $[n]$, called blocks, such that any pair of
distinct elements in $[n]$ is contained in exactly $\lambda$ blocks
\cite[Chapter 1]{Stinson}. In such a BIBD, each element of $[n]$ is contained in
exactly $r = \frac{\lambda (n-1)}{k-1}$ blocks  and the number of blocks is
$$
b \triangleq|\mathcal{B}|=\frac{nr}{k} = \frac{\lambda n(n-1)}{k(k-1)}.
$$

By definition, it is easy to see that if $\mathcal{B}$ is an
$(n,k,\lambda)$-BIBD, then $\mathcal{B}$ is a balanced pair-separating family
with separation number $\theta = 2(r-\lambda)$. By the requirement $\theta \ge
|\mathcal{F}|/2$ in \autoref{theorem:design}, and combining formulas on $r$ and
$b$, we have the following observation.

\begin{observation}\label{obs} 
If there is an $(n,k,\lambda)$-BIBD with
$4k(n-k)\geq n(n-1)$, then $$\chi_q(H(n,2))\leq \frac{4\lambda(n-k)}{k-1}.$$
\end{observation}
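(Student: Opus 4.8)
The plan is to feed the BIBD directly into \autoref{theorem:design}. First I would check that an $(n,k,\lambda)$-BIBD $\mathcal{B}$, viewed as a family of subsets of $[n]$, is a balanced pair-separating family and read off its separation number: for a fixed pair $i \neq j$, exactly $\lambda$ blocks contain both $i$ and $j$ while exactly $r$ blocks contain $i$, so $r-\lambda$ blocks contain $i$ but not $j$; by symmetry $r-\lambda$ blocks contain $j$ but not $i$, and these two collections of blocks are disjoint, giving separation number $\theta = 2(r-\lambda)$, which is independent of the chosen pair. Thus $\mathcal{B}$ qualifies as an $\mathcal{F}$ in \autoref{theorem:design} with $|\mathcal{F}| = b$.

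Next I would substitute the standard $2$-design identities $r = \lambda(n-1)/(k-1)$ and $b = |\mathcal{B}| = \lambda n(n-1)/(k(k-1))$. A one-line computation gives $r - \lambda = \lambda(n-k)/(k-1)$, hence $\theta = 2\lambda(n-k)/(k-1)$ and $2\theta = 4\lambda(n-k)/(k-1)$, which is exactly the bound to be established. It then remains to verify that the hypothesis of \autoref{theorem:design}, namely $\theta \ge |\mathcal{F}|/2$, i.e.\ $2(r-\lambda) \ge b/2$, follows from the stated assumption $4k(n-k) \ge n(n-1)$. Clearing denominators, $2(r-\lambda) \ge b/2$ reads $4\lambda(n-k)/(k-1) \ge \lambda n(n-1)/(k(k-1))$, and multiplying through by $k(k-1)/\lambda > 0$ turns this into precisely $4k(n-k) \ge n(n-1)$. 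So the hypotheses of \autoref{theorem:design} hold, and applying it yields $\chi_q(H(n,2)) \le 2\theta = 4\lambda(n-k)/(k-1)$, as claimed.

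There is essentially no analytic obstacle here: the whole argument is parameter bookkeeping on top of \autoref{theorem:design} and the elementary counting identities for a $2$-design. The only points requiring minor care are the disjointness of ``contains $i$ not $j$'' and ``contains $j$ not $i$'' when reading off $\theta = 2(r-\lambda)$, and correctly tracking that the replication number $r$ and block count $b$ are not free parameters but are forced by $(n,k,\lambda)$, so that the hypothesis $4k(n-k)\ge n(n-1)$ is exactly the translation of $\theta \ge |\mathcal{F}|/2$.
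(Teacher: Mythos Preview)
Your proposal is correct and follows exactly the approach the paper intends: recognize that a $(n,k,\lambda)$-BIBD is a balanced pair-separating family with $\theta = 2(r-\lambda)$, substitute the standard identities for $r$ and $b$, and observe that the hypothesis $4k(n-k)\ge n(n-1)$ is precisely the condition $\theta \ge |\mathcal{F}|/2$ needed to invoke \autoref{theorem:design}. The paper leaves these computations implicit, so your write-up is simply a more detailed version of the same argument.
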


In order to get upper bounds matching the spectral lower bounds in
\autoref{eqn:spectral_lb_Hn2}, BIBDs with the following parameters are useful.

\begin{corollary}
\label{lemma:symmetric_BIBD}
\begin{itemize}
\item[(1)] If there exists an $(n,(n-1)/2,(n-3)/4)$-BIBD with $n \equiv 3
\pmod{4}$, then $\chi_q(H(n,2)) = n+1$.
\item[(2)]  If there is a $(4s^2,2s^2-s,s^2-s)$-BIBD for some integer $s$, then
$\chi_q(H(4s^2,2)) = 4s^2$.
\end{itemize}
\end{corollary}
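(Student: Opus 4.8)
The plan is to reduce everything to \autoref{obs} and then exhibit the two families of BIBDs with the required parameters. For part (1), take an $(n,k,\lambda)$-BIBD with $k=(n-1)/2$ and $\lambda=(n-3)/4$ (which forces $n\equiv 3\pmod 4$ for $\lambda$ to be an integer). I first check the hypothesis $4k(n-k)\ge n(n-1)$: here $n-k=(n+1)/2$, so $4k(n-k)=(n-1)(n+1)=n^2-1\ge n(n-1)=n^2-n$, which holds for all $n\ge 1$. Thus \autoref{obs} applies and gives
$$
\chi_q(H(n,2))\le \frac{4\lambda(n-k)}{k-1}=\frac{4\cdot\frac{n-3}{4}\cdot\frac{n+1}{2}}{\frac{n-3}{2}}=\frac{(n-3)(n+1)/2}{(n-3)/2}=n+1.
$$
Since $n$ is odd, the spectral lower bound \autoref{eqn:spectral_lb_Hn2} gives $\chi_q(H(n,2))\ge n+1$, so equality holds.

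For part (2), take a $(4s^2,\,2s^2-s,\,s^2-s)$-BIBD, so $n=4s^2$, $k=2s^2-s$, $\lambda=s^2-s$, and $n-k=2s^2+s$. I again verify $4k(n-k)\ge n(n-1)$: $4k(n-k)=4(2s^2-s)(2s^2+s)=4(4s^4-s^2)=16s^4-4s^2$, while $n(n-1)=4s^2(4s^2-1)=16s^4-4s^2$, so the inequality holds with equality. Then \autoref{obs} yields
$$
\chi_q(H(4s^2,2))\le \frac{4\lambda(n-k)}{k-1}=\frac{4(s^2-s)(2s^2+s)}{2s^2-s-1}=\frac{4s(s-1)\cdot s(2s+1)}{(2s+1)(s-1)}=\frac{4s^2(s-1)(2s+1)}{(2s+1)(s-1)}=4s^2.
$$
Since $n=4s^2$ is even, the spectral lower bound \autoref{eqn:spectral_lb_Hn2} gives $\chi_q(H(4s^2,2))\ge 4s^2$, and again equality follows.

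The only real content beyond these routine arithmetic simplifications is confirming that \autoref{obs} is applicable, i.e.\ that the divisibility/integrality constraints make sense; in particular a $(4s^2,2s^2-s,s^2-s)$-BIBD has replication number $r=\lambda(n-1)/(k-1)=(s^2-s)(4s^2-1)/(2s^2-s-1)=(s-1)s(2s-1)(2s+1)/((2s+1)(s-1))=s(2s-1)=2s^2-s$, so $b=nr/k=4s^2$, meaning such a design is \emph{symmetric}—these are exactly Menon-type / Hadamard $2$-designs, whose existence is a well-studied question. I expect the main (and really the only nontrivial) obstacle to be not the proof of this corollary but rather, downstream, producing concrete parameter values $n$ for which BIBDs of these types are known to exist; that is handled separately in the constructions summarized in \autoref{tab:BIBD}. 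The corollary itself is then just the observation that whenever such designs exist, the spectral lower bound is tight.
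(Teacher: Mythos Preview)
Your proof is correct and follows exactly the paper's approach: apply \autoref{obs} for the upper bound and \autoref{eqn:spectral_lb_Hn2} for the lower bound. The paper's own proof is literally just these two references, so your explicit verification of the arithmetic (the condition $4k(n-k)\ge n(n-1)$ and the simplification of $4\lambda(n-k)/(k-1)$) fills in precisely the computations the paper leaves to the reader.
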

\begin{proof} The upper bounds are from \autoref{obs} and the lower bounds are
from \autoref{eqn:spectral_lb_Hn2}.
\end{proof}

BIBDs with parameters in \autoref{lemma:symmetric_BIBD} are in fact
symmetric BIBDs, i.e., satisfying $b=n$.
In \autoref{tab:BIBD}, we collect some known existence of symmetric
$(n,k,\lambda)$-BIBDs satisfying the requirements of \autoref{lemma:symmetric_BIBD}
and determine the corresponding quantum chromatic of $H(n,2)$.

\begin{table}
	\centering
	\caption{Exact Quantum Chromatic Number of $H(n,2)$ from Symmetric BIBD}
	\label{tab:BIBD}
	\begin{tabular}{lcl}
	\toprule
	$(n=b,k=r,\lambda)$ & 
	$\chi_q(H(n,2))$ &
	Comments and References \\
	\midrule
	\multirow{2}{*}{$(q,(q-1)/2,(q-3)/4)$} &
	\multirow{2}{*}{$q+1$} &
	\multirow{2}{*}{\makecell[l]{$q \equiv 3 \pmod{4}$ is a prime \\ power 
	\cite[Chapter 3]{Stinson}}}\\
	&& \\
	\hline
	\multirow{2}{*}{$(2^{t+2}-1,2^{t+1}-1,2^t-1)$} &
	\multirow{2}{*}{$2^{t+2}$} &
	\multirow{2}{*}{$t$ is a positive integer \cite[II.6.8]{colbourn}} \\
	&&\\
	\hline
	{$(q^2+2q,\frac{q^2+2q-1}{2},\frac{q^2+2q-3}{4})$} &
	{$q^2+2q+1$} &
	\makecell[l]{$q$ and $q+2$ are both odd \\ prime powers 
	\cite[VI.18.5]{colbourn}}\\
	\hline
	\multirow{2}{*}{$(4s^2,2s^2-s,s^2-s)$} & 
	\multirow{2}{*}{$4s^2$} & \multirow{2}{*}{\makecell[l]{Existence of regular
	Hadamard \\ matrix of order $4s^2$ \cite[VI.18.6]{colbourn}}}\\
	&& \\
	\bottomrule
	\end{tabular}
\end{table}

We finish this section with a remark concerning the classical chromatic number
of $H(n,2)$. It is clear that $\chi_q(G) \le \chi(G)$ for any graph $G$. So, it
is natural to consider $\chi(H(n,2))$ as an upper bound of $\chi_q(H(n,2))$. The
values of $\chi(H(n,2))$ has been studied in
\cite{Xing2013coloring,Kokkala2018Chromatic}. For $n \le 16$, we collect known
results on $\chi(H(n,2))$ in the literature
\cite{Kokkala2018Chromatic,Xing2013coloring,Ziegler,Lauri14colorable,chromatic_table}
in \autoref{tab:chi_Hn2}. We can see from \autoref{tab:chi_Hn2} that
$\chi(H(n,2))$ does not match the spectral lower bound of $\chi_q(H(n,2))$ in
general, leaving $\chi_q(H(n,2))$ undetermined. Also from \autoref{tab:chi_Hn2},
for $n \in \setnd{9,10}$, $\chi_q(H(n,2)) \le 12$ while $13 \le \chi(H(n,2))$
and for $n = 11$, $\chi_q(H(11,2)) = 12$ while $15 \le \chi(H(11,2))$,
illustrating a separation between the quantum and classical chromatic numbers of
$H(9,2),H(10,2)$ and $H(11,2)$.

\begin{table}
\centering
\begin{threeparttable}
\caption{Chromatic Number of $H(n,2)$
\cite{Kokkala2018Chromatic,Xing2013coloring,Ziegler,Lauri14colorable,chromatic_table}}
\label{tab:chi_Hn2}
\begin{tabular}{c|cccccccc}
\toprule
$n$ & $2$ & $3$ & $4$ & $5$ & $6$ & $7$ & $8$  & $9$\\
\midrule
$\chi(H(n,2))$ & $2$ & $4$ & $4$ & $8$ & $8$ & $8$ & $8$ & $13$ \\
$\chi_q(H(n,2))$ & $2$ & $4$ & $4$ & $6-8$ & $6-8$ & $8$ & $8$ & $10-12$ \\
\midrule
$n$ & $10$ & $11$ & $12$ & $13$ & $14$ & $15$ & $16$ \\
\midrule
$\chi(H(n,2))$  & $13-14$ & $15-16$ & $15-16$ & $16$ & $16$ & $16$ & $16$ \\
$\chi_q(H(n,2))$ & $10-12$ & $12$ & $12-16$ & $14-16$ & $14-16$ & $16$ & $16$ \\
\bottomrule
\end{tabular}
\begin{tablenotes}
\item[1] References on $\chi(H(n,2))$: $n \le 8$ in \cite{Ziegler},
$n \in [9,10]$  in \cite{Kokkala2018Chromatic,Lauri14colorable},
$n \in [11,16]$  in \cite{chromatic_table}.
\item[2] The lower bounds on $\chi_q(H(n,2))$ is the spectral lower bound in
\autoref{eqn:spectral_lb_Hn2}. For $n = 11$, $\chi_q(H(11,2)) = 12$ by
\autoref{tab:BIBD}. Note that $H(n,2)$ can be embedded into $H(n+1,2)$ for all
$n$. Thus, $\chi_q(H(9,2)) \le \chi_q(H(10,2)) \le \chi_q(H(11,2)) = 12$.
For the rest $n$, $\chi_q(H(n,2)) \le \chi(H(n,2))$.
\end{tablenotes}
\end{threeparttable}
\end{table}
\section{Conclusion}
\label{sec:conclusion}
In this paper, we determined the quantum chromatic number of several Cayley
graphs over $\ZZ_p^n$, which can be regarded as subgraphs of the orthogonality
graphs. Further, we proposed an approach to construct flat orthogonal
representations for the graph $H(n,2)$ by using tools in design theory and
determined the quantum chromatic number of $H(n,2)$ for infinitely $n$. All the
main results of this paper have been summarized in \autoref{tab:results}.

There are still many interesting questions that might be considered in the
future. We list some of them below.
\begin{itemize}
\item[(1)] Can one extend \autoref{g5} into general integer $p$?
\item[(2)] Determine the quantum chromatic number of $H(n,2)$ for more $n$.
\item[(3)] Can one construct balanced pair-separating families with big
separation number not from BIBDs?
\item[(4)]  The definition of balanced pair-separating families can be easily
extended to construct the flat orthogonal representations for $H(n,2t)$.
However, even for $t=2$, we failed to find any known $4$-designs in the
literature yielding an upper bound of $\chi_q(H(n,4))$ matching the spectral
lower bound. Some new ideas are needed to determine the quantum chromatic number
of $H(n,4)$.
\end{itemize}

\subsection*{Acknowledgements}
Thanks to Professor Jack Koolen for helpful discussions on the proof of \autoref{theorem:goal}.
The research of all authors was supported in part
by the Quantum Science and Technology-National Science and Technology Major Project
under Grant 2021ZD0302902, in part by NSFC under Grant 12171452 and Grant 12231014, and in
part by the National Key Research and Development Programs of China under Grant
2023YFA1010200 and Grant 2020YFA0713100.

\appendix
\section{Remaining Proof of \autoref{cl1}}
\label{appsec:cl1}
\begin{proof}
When $t_0=1$, $t_1 \equiv t_2 \equiv 1\pmod{3}$ and $1\leq t_1\leq t_2$, by \autoref{eqn:t012}
$$\lambda(1,t_1,t_2)=-n\frac{\binom{n}{l,l,l}}{\binom{n}{1,t_1,t_2}}\binom{l-1}{(t_1-1)/3}.$$
Then $\abs{\lambda(1,t_1,t_2)} \leq
\abs{\lambda(1,1,n-2)}$ is equivalent to that
$
n-1 \le \frac{\binom{n-1}{t_1}}{\binom{l-1}{(t_1-1)/3}}.
$
Let $f(t) \defeq
\frac{\binom{n-1}{t}}{\binom{l-1}{(t-1)/3}}$, then
for $t \equiv
1 \pmod{3}$ such that $1 \le t$ and $t+3 \le (n-1)/2$,
$$
\frac{f(t+3)}{f(t)} = \frac{(n-t-1)(n-t-3)}{(t+2)(t+1)} \ge 1.
$$
Therefore, $f(t)$ is increasing with respect to $t$, and
$
f(t) \ge f(1) = n-1.
$

When $t_0=2$, $t_1 \equiv t_2
\equiv 2\pmod{3}$  and $2\leq t_1\leq t_2$,
$\lambda(2,t_1,t_2)=9\frac{\binom{n}{l,l,l}}{\binom{n}{2,t_1,t_2}}
\binom{l}{2}\binom{l-2}{(t_1-2)/3}$ by \autoref{eqn:t012}. Then
$\abs{\lambda(2,t_1,t_2)} \leq \abs{\lambda(1,1,n-2)}$ is equivalent to that
$
n-3 \le \frac{\binom{n-2}{t_1}}{\binom{l-2}{(t_1-2)/3}}.
$
Write $k \defeq (t_1-2)/3$, then $k \in [0,\floor{n/6}-1]$.
Let $f(k) \defeq \frac{\binom{n-2}{3k+2}}{\binom{l-2}{k}}$, then for
$0 \le k$ and $k+1 \le \floor{n/6}-1$,
$$
\frac{f(k+1)}{f(k)} = \frac{(n-3k-4)(n-3k-5)}{(3k+4)(3k+5)} \ge 1.
$$
Therefore, $f(k)$ is increasing with respect to $k$, and
$
f(k) \ge f(0) = \frac{(n-2)(n-3)}{2}
\ge n-3
$
for $n \ge 6$.
\end{proof}

\section{Proof of \autoref{theorem:third_largest_eigenvalue_3}}
\label{appsec:third_largest_eigenvalue_3}
\begin{proof}
By \autoref{eqn:lambda_v} and \autoref{eqn:lambda_T},
$$
\lambda(2,2,n-4) = \frac{2\binom{n}{l,l,l}}{(n-1)(n-2)}
$$
is a positive eigenvalue. We show that $\lambda(2,2,n-4)$ is of the third largest
absolute value. The proof follows from a similar approach as in the proof of
\autoref{theorem:goal}. Recall in the proof of \autoref{theorem:goal} that
\begin{equation}
\label{eqn:equality_holds}	
\abs{\lambda(t_0,t_1,t_2)} \le \frac{\binom{n}{l,l,l}}{\binom{n}{t_0,t_1,t_2}}
(x^3+y^3+z^3+3xyz)^l[t_0,t_1,t_2],
\end{equation}
with equality holds when $t_0 \le 2$, and denote
$h(t_0,t_1,t_2) \defeq (x^3+y^3+z^3+3xyz)^l[t_0,t_1,t_2]$.
It suffices to show that
\begin{equation}
\label{eqn:hrst_inequality}	
h(t_0,t_1,t_2) \le \frac{2\binom{n}{t_0,t_1,t_2}}{(n-1)(n-2)}.
\end{equation}
\autoref{eqn:hrst_inequality} will be shown by induction on $n$.
The base case when $t_0 \le 2$ can be obtained by Claims~\ref{appcl1}-\ref{appcl3}
with the fact that the equality in \autoref{eqn:equality_holds} holds when
$t_0 \le 2$.
\begin{claim}
	\label{appcl1}
When $t_0=0$, $t_1 \equiv t_2 \equiv 0\pmod{3}$ and $0< t_1\leq t_2$,
$\abs{\lambda(t_0,t_1,t_2)} \leq \abs{\lambda(2,2,n-4)}.$
\end{claim}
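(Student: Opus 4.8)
The plan is to evaluate $\lambda(0,t_1,t_2)$ in closed form and then reduce the claimed inequality to a binomial-coefficient estimate that has essentially already been carried out inside \autoref{cl1}. First I would observe that in $(x^3+y^3+z^3-3xyz)^l$ the only term involving the variable $x$ is $-3xyz$, so every monomial of $x$-degree $0$ arises solely from expanding $(y^3+z^3)^l$. Using \autoref{eqn:t012} together with $\binom{n}{0,t_1,t_2}=\binom{n}{t_1}$, this yields
\[
\lambda(0,t_1,t_2)=\frac{\binom{n}{l,l,l}}{\binom{n}{t_1}}\binomial{l}{t_1/3}>0 ,
\]
so $\abs{\lambda(0,t_1,t_2)}=\lambda(0,t_1,t_2)$. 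Since also $\lambda(2,2,n-4)=\frac{2\binom{n}{l,l,l}}{(n-1)(n-2)}>0$ by \autoref{eqn:t012}, the desired inequality $\abs{\lambda(0,t_1,t_2)}\le\abs{\lambda(2,2,n-4)}$ is equivalent to
\[
\frac{\binom{n}{t_1}}{\binomial{l}{t_1/3}}\ \ge\ \frac{(n-1)(n-2)}{2}.
\]

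Next I would reuse the function $f(t)\defeq\binom{n}{t}/\binomial{l}{t/3}$ (for $t\equiv 0\pmod 3$) that already appears in the $t_0=0$ case of \autoref{cl1}: there one computes $f(t+3)/f(t)=\frac{(n-t-1)(n-t-2)}{(t+1)(t+2)}=1+\frac{n(n-2t-3)}{(t+1)(t+2)}$, which is $\ge 1$ precisely when $t\le(n-3)/2$. Hence, on the positive multiples of $3$, $f$ is non-decreasing up to the turning point near $(n-3)/2$ and non-increasing afterwards, so $f$ is unimodal there; together with the symmetry $f(t)=f(n-t)$ (from $\binom{n}{t}=\binom{n}{n-t}$ and $\binomial{l}{t/3}=\binomial{l}{(n-t)/3}$) this forces $f(t)\ge f(3)$ for every $t$ with $0<t<n$. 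Finally $f(3)=\binom{n}{3}/l=(n-1)(n-2)/2$ since $n=3l$, and the hypotheses $0<t_1\le t_2$, $t_1+t_2=n$ give $0<t_1<n$; therefore $f(t_1)\ge(n-1)(n-2)/2$, which is exactly the inequality needed, establishing the claim.

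I do not expect a genuine obstacle here, as the argument simply transcribes the $t_0=0$ analysis of \autoref{cl1}. The one place deserving a little care is the unimodality-plus-symmetry bookkeeping ensuring that $f(t_1)$ never falls below $f(3)$ even when $t_1$ is close to $n/2$ — in particular for even $n$ with $t_1=t_2=n/2$ — and one should keep the standing hypothesis $n=3l\ge 9$ in mind, which guarantees that $\lambda(2,2,n-4)$ is meaningful and that $t=3$ is an admissible argument of $f$ realizing its minimum.
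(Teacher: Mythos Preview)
Your proposal is correct and follows essentially the same route as the paper: it computes $\lambda(0,t_1,t_2)$ explicitly, reduces the inequality to $\binom{n}{t_1}/\binom{l}{t_1/3}\ge (n-1)(n-2)/2$, and then notes that this is exactly the bound $f(t_1)\ge f(3)=(n-1)(n-2)/2$ already established in the $t_0=0$ case of \autoref{cl1}. Your extra unimodality/symmetry bookkeeping is harmless but not strictly needed, since the hypothesis $t_1\le t_2$ forces $t_1\le n/2$, placing $t_1$ in the increasing range of $f$.
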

\begin{proof}
By \autoref{eqn:t012},
$\lambda(0,t_1,t_2)=\frac{\binom{n}{l,l,l}}{\binom{n}{t_1}}\binom{l}{t_1/3}$.
Then $\abs{\lambda(0,t_1,t_2)} \le \abs{\lambda(2,2,n-4)}$ is equivalent to that
$\frac{(n-1)(n-2)}{2} \le \frac{\binom{n}{t_1}}{\binom{l}{t_1/3}}$, which has
been verified in the proof of \autoref{cl1}. This essentially shows that
$\lambda(2,2,n-4) = \lambda(0,3,n-3)$ share the same value.
\end{proof}

\begin{claim}
	\label{appcl2}
When $t_0=1$ and $t_1 \equiv t_2 \equiv 1\pmod{3}$ and $1 < t_1 \leq t_2$,
$\abs{\lambda(t_0,t_1,t_2)} \leq \abs{\lambda(2,2,n-4)}$.
\end{claim}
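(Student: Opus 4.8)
The plan is to run the same template as the proof of \autoref{cl1} in \autoref{appsec:cl1}, only now comparing against $\lambda(2,2,n-4)$ instead of $\lambda(1,1,n-2)$. First I would apply \autoref{eqn:t012} with $t_0=1$. In the multinomial expansion of $(x^3+y^3+z^3-3xyz)^l$ a term $\binom{l}{a,b,c,d}(x^3)^a(y^3)^b(z^3)^c(-3xyz)^d$ has $x$-degree $3a+d$, so $x$-degree $1$ forces $a=0$, $d=1$, and then $b=(t_1-1)/3$, $c=(t_2-1)/3$ with $b+c=l-1$ (using $t_1+t_2=n-1$). Hence
\[
\lambda(1,t_1,t_2)=-n\,\frac{\binom{n}{l,l,l}}{\binom{n}{1,t_1,t_2}}\binom{l-1}{(t_1-1)/3}<0,
\]
while $\lambda(2,2,n-4)=\frac{2\binom{n}{l,l,l}}{(n-1)(n-2)}>0$ as computed at the start of this proof. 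Using $\binom{n}{1,t_1,t_2}=n\binom{n-1}{t_1}$, the inequality $\abs{\lambda(1,t_1,t_2)}\le\abs{\lambda(2,2,n-4)}$ is therefore equivalent, after cancelling $\binom{n}{l,l,l}$, to
\[
\frac{(n-1)(n-2)}{2}\le f(t_1),\qquad f(t)\defeq\frac{\binom{n-1}{t}}{\binom{l-1}{(t-1)/3}}.
\]

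This is precisely the function $f$ treated in the $t_0=1$ case of the proof of \autoref{cl1}, where it is shown that $f(t+3)/f(t)\ge 1$ for $t\equiv 1\pmod 3$ with $t+3\le (n-1)/2$, so $f$ is nondecreasing along the progression $1,4,7,\dots$ up to $(n-1)/2$. Since $t_0=1$ and $t_1\le t_2$ force $t_1\le (n-1)/2$, this monotonicity applies to all the relevant values of $t_1$; and since the hypothesis $t_1>1$ together with $t_1\equiv 1\pmod 3$ forces $t_1\ge 4$, I would conclude $f(t_1)\ge f(4)=\binom{n-1}{4}/(l-1)$.

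It then remains to check the base case $\binom{n-1}{4}/(l-1)\ge (n-1)(n-2)/2$, i.e.\ $(n-3)(n-4)\ge 12(l-1)$. Substituting $n=3l$ makes the left-hand side $3(l-1)(3l-4)$, so the inequality reduces to $3l-4\ge 4$, which holds because $n=3l\ge 9$. Together with the reduction and the monotonicity of $f$ this completes the proof.

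I do not expect a genuine obstacle, since the argument is structurally identical to \autoref{cl1}; the only two points needing care are (a) using $t_1\le (n-1)/2$ so that the monotonicity of $f$ already established in \autoref{cl1} covers the whole relevant range, and (b) verifying that the new base value $f(4)$ — rather than $f(1)$, which the hypothesis $t_1>1$ now excludes — is already at least $(n-1)(n-2)/2$.
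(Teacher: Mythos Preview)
Your proposal is correct and follows essentially the same approach as the paper: compute $\lambda(1,t_1,t_2)$ via \autoref{eqn:t012}, reduce to $\frac{(n-1)(n-2)}{2}\le f(t_1)$ with the same $f$, invoke the monotonicity already shown in \autoref{appsec:cl1}, and check the base case $f(4)$. The paper simplifies $f(4)$ to $\frac{(n-1)(n-2)(n-4)}{8}$ and concludes for $n\ge 8$, which for $n=3l$ is the same threshold $n\ge 9$ you obtain.
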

\begin{proof}
By \autoref{eqn:t012},
$\lambda(1,t_1,t_2)=-n\frac{\binom{n}{l,l,l}\binom{l-1}{(t_1-1)/3}}{\binom{n}{1,t_1,t_2}}$
and $\abs{\lambda(1,t_1,t_2)} \leq \abs{\lambda(2,2,n-4)}$ is equivalent to that
$
\frac{(n-1)(n-2)}{2} \le \frac{\binom{n-1}{t_1}}{\binom{l-1}{(t_1-1)/3}}.
$
Let $f(t) \defeq \frac{\binom{n-1}{t}}{\binom{l-1}{(t-1)/3}}$,
for $t \in [4,(n-1)/2]$ with $t \equiv 1 \pmod{3}$.
It has been verified in \autoref{appsec:cl1} that $f(t)$ is increasing with respect to $t$.
Thus, $f(t) \ge f(4) = \frac{(n-1)(n-2)(n-4)}{8} \ge \frac{(n-1)(n-2)}{2}$ when
$n \ge 8$.
\end{proof}

\begin{claim}\label{appcl3}
When $t_0=2$ and $t_1 \equiv t_2
\equiv 2\pmod{3}$  and $2\leq t_1\leq t_2$,
$\abs{\lambda(t_0,t_1,t_2)} \leq  \abs{\lambda(2,2,n-4)}$.
\end{claim}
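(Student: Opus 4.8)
The plan is to mirror, almost verbatim, the $t_0 = 2$ case of \autoref{cl1} treated in \autoref{appsec:cl1}, since \autoref{appcl3} is its exact analogue with the reference eigenvalue $\abs{\lambda(1,1,n-2)}$ replaced by $\abs{\lambda(2,2,n-4)}$. First I would make $\lambda(2,t_1,t_2)$ explicit from \autoref{eqn:t012}. Expanding $(x^3+y^3+z^3-3xyz)^l = \sum_{a+b+c+d=l}\binom{l}{a,b,c,d}(-3)^d x^{3a+d}y^{3b+d}z^{3c+d}$, the requirement $3a+d = t_0 = 2$ forces $a=0$ and $d=2$, so (using $t_1\equiv t_2\equiv 2\pmod 3$ and $(t_1-2)/3+(t_2-2)/3 = l-2$)
\[
\lambda(2,t_1,t_2) = 9\,\frac{\binom{n}{l,l,l}}{\binom{n}{2,t_1,t_2}}\binom{l}{2}\binom{l-2}{(t_1-2)/3} \ge 0 ,
\]
so that $\abs{\lambda(2,t_1,t_2)} = \lambda(2,t_1,t_2)$; specializing $t_1 = 2$ recovers the already-stated value $\lambda(2,2,n-4) = \frac{2\binom{n}{l,l,l}}{(n-1)(n-2)}$.

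Next I would perform the algebraic reduction. Dividing $\lambda(2,t_1,t_2)$ by $\lambda(2,2,n-4)$ and simplifying with $t_1+t_2 = n-2$ and $\binom{n-2}{t_1} = (n-2)!/(t_1!\,t_2!)$ shows that $\abs{\lambda(2,t_1,t_2)} \le \abs{\lambda(2,2,n-4)}$ is equivalent to
\[
\frac{(n-2)(n-3)}{2} \le \frac{\binom{n-2}{t_1}}{\binom{l-2}{(t_1-2)/3}} .
\]
Writing $k = (t_1-2)/3$, the right-hand side is precisely $f(k) = \binom{n-2}{3k+2}/\binom{l-2}{k}$, the function already analyzed in \autoref{appsec:cl1}: there it is shown that $f(k+1)/f(k) = \frac{(n-3k-4)(n-3k-5)}{(3k+4)(3k+5)} \ge 1$ on the relevant range of $k$, hence $f$ is non-decreasing, and so $f(k) \ge f(0) = \binom{n-2}{2} = \frac{(n-2)(n-3)}{2}$. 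This is exactly the inequality needed, with equality at $k=0$, i.e., at $(t_1,t_2) = (2,n-4)$.

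The argument contains no genuine obstacle: the monotonicity of $f$ has already been established in \autoref{appsec:cl1}, so the only real work is the bookkeeping that reduces the claim to ``$f(k)\ge f(0)$''. The one point that deserves a line of care is the index range --- one must check that the admissible values of $k$, constrained by $t_1 \le t_2$ (equivalently $t_1 \le (3l-2)/2$) to lie in $[0,\floor{n/6}-1]$, stay within the range on which $f$ was proved to be increasing; this holds since $k = (t_1-2)/3 \le (l-2)/2$.
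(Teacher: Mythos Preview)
Your proposal is correct and follows essentially the same approach as the paper: compute $\lambda(2,t_1,t_2)$ explicitly from \autoref{eqn:t012}, reduce the desired inequality to $\frac{(n-2)(n-3)}{2} \le \binom{n-2}{t_1}/\binom{l-2}{(t_1-2)/3}$, and then invoke the monotonicity of $f(k)$ already established in \autoref{appsec:cl1} together with $f(0)=\frac{(n-2)(n-3)}{2}$. Your additional remarks on the equality case and the admissible range of $k$ are a nice touch but not essential.
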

\begin{proof}
By \autoref{eqn:t012},
$\lambda(2,t_1,t_2)=9\frac{\binom{n}{l,l,l}\binom{l}{2}}
{\binom{n}{2,t_1,t_2}}\binom{l-2}{(t_1-2)/3}$ and 
$\abs{\lambda(2,t_1,t_2)} \leq \abs{\lambda(2,2,n-4)}$ is equivalent to that
$
\frac{(n-2)(n-3)}{2} \le \frac{\binom{n-2}{t_1}}{\binom{l-2}{(t_1-2)/3}},
$
which has been verfied in the proof of \autoref{appsec:cl1}.
\end{proof}
Now assume that $t_0 \ge 3$, $t_0 \equiv t_1 \equiv t_2 \pmod{3}$. Then
\begin{multline*}
h(t_0,t_1,t_2) = h(t_0-3,t_1,t_2) + h(t_0,t_1-3,t_2) + h(t_0,t_1,t_2-3) + 3h(t_0-1,t_1-1,t_2-1) \\
\le \frac{2}{(n-4)(n-5)} \left(
\binom{n-3}{t_0-3,t_1,t_2} + \binom{n-3}{t_0,t_1-3,t_2} + \right. \\
\left. \binom{n-3}{t_0,t_1,t_2-3}  + 3\binom{n-3}{t_0-1,t_1-1,t_2-1} \right),
\end{multline*}
where the inequality follow from the induction hypothesis.
To get \autoref{eqn:hrst_inequality}, it suffices to have
\begin{equation}
	\label{eqn:rst_inequality}
t_0(t_0-1)(t_0-2) + t_1(t_1-1)(t_1-2) + t_2(t_2-1)(t_2-2) + 3t_0t_1t_2 \le n(n-4)(n-5).
\end{equation}
Let $f(t_0,t_1,t_2) \defeq n(n-4)(n-5) - t_0(t_0-1)(t_0-2) - t_1(t_1-1)(t_1-2) -
t_2(t_2-1)(t_2-2) - 3t_0t_1t_2$ where $n = t_0+t_1+t_2$.
It suffices to show that $f(t_0,t_1,t_2) \ge 0$ for all $t_0,t_1,t_2 \ge 3$.
We think of $t_1,t_2$ as fixed and show that $f(t_0,t_1,t_2)$ is increasing with
respect to $t_0$. By calculation
$$
f(t_0,t_1,t_2) - f(t_0-1,t_1,t_2) = 3(t_1^2 - 7t_1 + t_2^2 -7t_2 + t_0t_1 + t_0t_2 + 8) + 6t_0(t_1+t_2-2).
$$
For $t_0,t_1,t_2 \ge 3$, $f(t_0,t_1,t_2) - f(t_0-1,t_1,t_2) \ge 0$ and
$f(t_0,t_1,t_2)$ is increasing with respect to $t_0 \ge 3$. By the symmetry of
$t_0,t_1,t_2$, $f(t_0,t_1,t_2)$ is increasing with respect to $t_0,t_1,t_2 \ge
3$, respectively. In conclusion, for $t_0,t_1,t_2 \ge 3$, $f(t_0,t_1,t_2) \ge
f(3,3,3) = 81 > 0$. The proof is completed.
\end{proof}


\end{document}